\newcolumntype{L}{>{$}l<{$}} 
\newcolumntype{C}{>{$}c<{$}} 
\def\namedlabel#1#2{\begingroup
   \def\@currentlabel{#2}%
   \label{#1}\endgroup
}
\newtheorem{theorem}{Theorem}[section]
\newtheorem{lemma}[theorem]{Lemma}
\newtheorem{proposition}[theorem]{Proposition}
\newtheorem*{theoremA}{Theorem A}
\newtheorem*{theoremB}{Theorem B}
\theoremstyle{definition}
\newtheorem{example}[theorem]{Example}
\theoremstyle{remark}
\newcommand{\abs}[1]{\left\vert#1\right\vert}
\newcommand{\R}{\mathbb{R}}
\newcommand{\lie}[1]{\mathfrak{#1}}     
\newcommand{\g}{\lie{g}}
\newcommand{\Z}{\mathbb{Z}}
\newcommand{\hook}{\lrcorner\,}
\newcommand{\Span}[1]{\operatorname{Span}\left\{#1\right\}}
\newcommand{\tran}[1]{\hspace{.2mm}\prescript{t\hspace{-.5mm}}{}{#1}}
\DeclareMathOperator{\ric}{ric}  
\DeclareMathOperator{\Aut}{Aut}
\DeclareMathOperator{\diag}{diag}
\DeclareMathOperator{\Der}{Der}
\DeclareMathOperator{\ad}{ad}
\DeclareMathOperator{\Tr}{tr}
\newcolumntype{C}{>{$}c<{$}}
\newcolumntype{L}{>{$}l<{$}}
\newcolumntype{R}{>{$}r<{$}}
\newcolumntype{C}{>{$}c<{$}}
\newcolumntype{L}{>{$}l<{$}}
\newcolumntype{R}{>{$}r<{$}}
\newcounter{rowno}
\begin{document}
\title{The Ricci-flatness that lurks in weight}
\author{Diego Conti}
\maketitle

\begin{abstract}
We introduce two constructions to obtain left-invariant Ricci-flat pseudo-Riemannian metrics on nilpotent Lie groups, one based on gradings, the other on filtrations, both depending on the combinatorics of the set of weights.

As an application, we show that every nilpotent Lie algebra of dimension up to $7$ and every nice nilpotent Lie algebra of dimension up to $9$ admit an indefinite Ricci-flat metric.
\end{abstract}

\renewcommand{\thefootnote}{\fnsymbol{footnote}}
\footnotetext{\emph{MSC class 2020}: \emph{Primary} 53C25; \emph{Secondary} 17B70 22E25 53C50}
\footnotetext{\emph{Keywords}: Ricci-flat metric, homogeneous metric, pseudo-Riemannian metric, nilpotent Lie algebra, graded Lie algebra.}
\renewcommand{\thefootnote}{\arabic{footnote}}

This paper is part of a broader programme to study homogeneous indefinite Einstein metrics. In light of the proof of the Aleksveesky conjecture by B\"ohm and Lafuente \cite{Böhm_Lafuente_2023}, a natural first step in this direction is understanding left-invariant Einstein metrics on solvable Lie groups. Contrary to the Riemannian case (see \cite{Alekseevskii_Kimel’fel’d_1975}), homogeneous Ricci-flat metrics on indefinite signature need not be flat. One therefore has to account for the existence of these metrics.

A first, natural question is whether every solvable Lie group admits a left-invariant Ricci-flat metric of some signature. The answer is negative: for instance, in dimension three all Lorentzian left-invariant metrics and their curvature are classified (\cite{Cordero_Parker}), and not all solvable Lie algebras have a Ricci-flat metric. The question remains open if ``solvable'' is replaced with ``nilpotent''.

The problem is normally studied at the Lie algebra level: rather than a left-invariant metric on a Lie group, one considers a metric on its Lie algebra $\g$, i.e. a nondegenerate scalar product, and the Levi-Civita connection, curvature and Ricci tensors are viewed as defined on $\g$ by restricting the corresponding objects relative to the associated left-invariant metric.

Existence of a Ricci-flat metric is known for some special classes: nilpotent Lie algebras with large center (\cite{Yan_Deng_2023});
filiform Lie algebras of rank two and almost abelian Lie algebras (\cite{Xiang_Yan_2021}); step-two nilpotent Lie algebras attached to graph, nilpotent Lie algebras of dimension up to $6$ and nilpotent Lie algebras of dimension $7$ admitting a nice basis (\cite{Conti_delBarco_Rossi_2021}). If one restricts to Lorentzian signature, not every nilpotent Lie algebra admits a Ricci-flat metric; classifications in step two and three are given in  \cite{Guediri_BinAsfour_2014,Boucetta_MohamedMM_2021}.

The techniques used to construct these indefinite Ricci-flat metrics differ considerably from those that are used in the study of the Ricci operator in positive-definite signature, which typically exploit the fact that Ricci-flat metrics are critical points of the scalar curvature functional, with constrained critical points corresponding to the weaker conditions on the Ricci operator that are of interest in the Riemannian case (see \cite{Nikolayevsky_2011,Arroyo_Lafuente_2020}). The essential difficulty in trying to adapt these methods is that in indefinite signature the scalar curvature functional fails to be convex.

By contrast,
most of the known Ricci-flat metrics on nilpotent Lie algebras are obtained either by choosing an appropriate ansatz and solving the polynomial equations in the metric coefficients corresponding to Ricci-flatness, or by using some version of the double extension procedure introduced in \cite{Medina_Revoy_1985} for $\ad$-invariant metrics. In \cite{Conti_delBarco_Rossi_2021}, a different construction was introduced, based on combinatorial objects called ``arrow-breaking involutions''. This method only applies to the class of nilpotent Lie algebras that admits a so-called \emph{nice basis}, which is a special kind of basis adapted to both the lower and upper central series. The idea is that it is sometimes possible to fix such a basis $\{e_i\}$ and then take a metric such that $e_i$ is orthogonal to $e_j$ unless $i=\sigma(j)$ for some order two permutation $\sigma$, in such a way that Ricci-flatness descends solely from the combinatorics of the nonvanishing structure constants (i.e.,  the  set of triples $(i,j,k)$ for which $[e_i,e_j]$ has a component along $e_k$), irrespective of the value of said structure constants. In particular, no polynomial equation needs to be solved. The main shortcoming of this approach is that carrying a nice basis is a restrictive condition on a nilpotent Lie algebra (increasingly so as the dimension grows). Additionally, the construction does not apply to all the  nilpotent Lie algebras that admit a nice basis.

The present paper is aimed at overcoming these limitations, whilst adopting a similar philosophy. Indeed, we introduce two constructions. The first replaces nice bases with the more general class of bases which diagonalize some split torus in the group of automorphisms; equivalently, we consider bases adapted to a grading. In Theorem~\ref{thm:foad} we give  sufficient conditions on  a grading that guarantees the existence of a Ricci-flat metric. Like in the arrow-breaking construction, these conditions  abstract from the structure constants and only make use of the grading; additionally, they do not require fixing a special basis, as the existence of a nontrivial split torus of derivations is an intrinsic property of the Lie algebra which can be determined algorithmically (see ~\cite{Kochetov_2009, Hakavuori_Kivioja_Moisala_Tripaldi_2022}). However, this construction does not apply to all Lie algebras. Most notably, it does not apply to characteristically nilpotent Lie algebras, namely the nilpotent Lie algebras all of whose derivations are nilpotent (which are not as scarce as one may expect, see e.g. \cite{Goze_Hakimjanov_1994}).

The second construction considers filtrations rather than gradings, and applies more often, as is to be expected if one considers that  every nilpotent Lie algebra admits nontrivial filtrations. As a filtered counterpart to Theorem~\ref{thm:foad}, in Theorem~\ref{thm:ffoad} we give  sufficient conditions on a filtration that imply existence of a Ricci-flat metric. As in the other case, these conditions do not depend on the value of the structure constants, and finding the metric does not require solving polynomial equations. On the other hand, determining filtrations is more complicated than computing a maximal split torus in the group of automorphisms. In Section~\ref{sec:enumeratefiltrations} we describe a simple algorithm that for a fixed basis on a Lie algebra determines filtrations satisfying the conditions of Theorem~\ref{thm:ffoad} such that each layer  is spanned by basis elements. We also prove that some Lie algebras do not carry filtrations of this type. However, we do not have an algorithm which can determine conclusively if Theorem~\ref{thm:ffoad} can be applied to a given Lie algebra.

Thus, we view the two constructions as complementary. Their relative effectiveness can be measured by testing them on  low-dimensional nilpotent Lie algebras. We recall that nilpotent Lie algebras are classified up to dimension $7$ (\cite{Gong_1998}), and nilpotent Lie algebras with a nice basis up to $9$ (\cite{Conti_Rossi_2019a}); in \cite{Conti_delBarco_Rossi_2021} the Lie algebras in these classifications were proved to admit a Ricci-flat metric up to dimensions $6$ and $7$, respectively. Using our constructions, we find:
\begin{theoremA}
\label{thm:dim7}
Every nilpotent Lie algebra of dimension $\leq 7$ has a Ricci-flat metric.
\end{theoremA}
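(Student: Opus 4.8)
The proof is a finite verification against the classification of $7$\dash dimensional nilpotent Lie algebras due to Gong \cite{Gong_1998}. By \cite{Conti_delBarco_Rossi_2021}, every nilpotent Lie algebra of dimension $\leq 6$, as well as every $7$\dash dimensional nilpotent Lie algebra admitting a nice basis, already carries a Ricci-flat metric, so it suffices to treat the Lie algebras in Gong's list of dimension exactly $7$ that do not admit a nice basis. For each such $\g$ the plan is to exhibit either a grading satisfying the hypotheses of Theorem~\ref{thm:foad} or a filtration satisfying the hypotheses of Theorem~\ref{thm:ffoad}. Since neither set of hypotheses involves the actual values of the structure constants, but only the combinatorics of the weight set, a single well-chosen grading or filtration handles an entire parametric family at once, provided that the chosen decomposition persists for all parameter values in the family.

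The first pass is through the grading construction. For each $\g$ in the list I would compute a maximal split torus of derivations --- this is algorithmic, see \cite{Kochetov_2009, Hakavuori_Kivioja_Moisala_Tripaldi_2022} --- together with the induced grading, and check whether that grading, or a suitable coarsening of it, meets the combinatorial conditions of Theorem~\ref{thm:foad}; whenever it does, $\g$ is disposed of. This step should cover the bulk of Gong's list, namely all Lie algebras carrying a sufficiently large split torus of derivations with the right weight combinatorics.

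The remaining, and genuinely delicate, cases are the characteristically nilpotent Lie algebras of dimension $7$: here every derivation is nilpotent, so there is no nontrivial split torus and Theorem~\ref{thm:foad} is vacuous. For these one must fall back on the filtration construction. Concretely, running the algorithm of Section~\ref{sec:enumeratefiltrations} on a fixed basis of each characteristically nilpotent algebra should produce a filtration whose layers are spanned by basis vectors and which satisfies the hypotheses of Theorem~\ref{thm:ffoad}; the same machinery also covers whatever non-characteristically-nilpotent algebras, if any, slipped through the first pass because their gradings fail the conditions of Theorem~\ref{thm:foad}. I expect the main obstacle to lie precisely here: unlike the torus computation, there is no a priori guarantee that a good filtration exists, so the characteristically nilpotent algebras must be inspected one by one, and should the basis-adapted filtrations of Section~\ref{sec:enumeratefiltrations} not suffice for some of them, the search has to be widened to more general filtrations before Theorem~\ref{thm:ffoad} can be invoked.
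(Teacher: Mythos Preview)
Your plan is correct and matches the paper's strategy, with one difference of emphasis worth noting. You propose gradings as the primary pass and filtrations as the fallback for the non-nice $7$\dash dimensional algebras; the paper effectively does the opposite. Theorem~\ref{thm:nonnicefiltration} (with Table~\ref{tbl:nonniceffoad}) shows that \emph{every} non-nice $7$\dash dimensional nilpotent Lie algebra admits a basis-adapted filtration satisfying (F1)--(F4), so filtrations alone dispose of the entire list, including all eleven characteristically nilpotent cases. Your concern that the algorithm of Section~\ref{sec:enumeratefiltrations} might fail on some of these and force a search over non-basis-adapted filtrations does not materialise in dimension~$7$. By contrast, the grading construction (Theorem~\ref{thm:nonnicegradings}) only handles $23$ of the $39$ non-nice algebras, so in the paper it is the filtration method that carries the weight, with gradings documented mainly for comparison. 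Your appeal to \cite{Conti_delBarco_Rossi_2021} for dimension $\leq 6$ and for the nice $7$\dash dimensional algebras is exactly what the paper does as well; the paper additionally re-derives the nice cases via filtrations (Proposition~\ref{prop:filtrationupto8}), where only two $6$\dash dimensional algebras resist, and those are already covered by \cite{Conti_delBarco_Rossi_2021}.
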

\begin{theoremB}
\label{thm:dim89}
Every nilpotent Lie algebra of dimension $\leq 9$ admitting a nice basis has a Ricci-flat metric.
\end{theoremB}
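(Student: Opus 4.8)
The plan is to prove both Theorem~A and Theorem~B together, by reducing them to the two combinatorial criteria established earlier, Theorem~\ref{thm:foad} (gradings) and Theorem~\ref{thm:ffoad} (filtrations), applied case by case to the relevant classification lists. By \cite{Conti_delBarco_Rossi_2021}, every nilpotent Lie algebra of dimension $\leq 6$, and every nilpotent Lie algebra of dimension $7$ admitting a nice basis, already carries a Ricci-flat metric; so it suffices to treat the Lie algebras of dimension exactly $7$ for Theorem~A, and the not-yet-covered nilpotent Lie algebras of dimension $8$ and $9$ with a nice basis for Theorem~B. These are finite lists: the $7$-dimensional nilpotent Lie algebras are classified in \cite{Gong_1998}, and the nice ones of dimension $8$ and $9$ in \cite{Conti_Rossi_2019a}.

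For each Lie algebra $\g$ on the list I would first attempt the grading construction. Concretely: compute a maximal split torus $\tg$ of $\Der(\g)$, which is an effective computation (see \cite{Kochetov_2009, Hakavuori_Kivioja_Moisala_Tripaldi_2022}), take the eigenspace decomposition $\g = \bigoplus_{\alpha} \g_\alpha$ it induces, and test the hypotheses of Theorem~\ref{thm:foad} against the resulting set of weights. If they hold, $\g$ has a Ricci-flat metric. Since those hypotheses depend only on the combinatorics of the weights and not on the values of the structure constants, and since one is free to also pass to coarser gradings obtained from sub-tori of $\tg$ (or from a generic linear functional on $\tg$), there is real flexibility here, and one expects the bulk of each list to be disposed of this way.

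The Lie algebras not settled by a grading — most notably the characteristically nilpotent ones, for which $\Der(\g)$ is nilpotent and admits no nontrivial split torus, but also any algebra whose induced weight set happens to fail the criterion of Theorem~\ref{thm:foad} — would then be attacked with the filtration construction. For these I would fix the basis coming from the classification and run the enumeration algorithm of Section~\ref{sec:enumeratefiltrations} to list all filtrations whose layers are spanned by basis vectors and which satisfy the hypotheses of Theorem~\ref{thm:ffoad}; any such filtration produces a Ricci-flat metric. A residual handful of algebras, if any, caught by neither construction would be handled directly, by exhibiting a Ricci-flat metric via a double extension or an explicit ansatz as in \cite{Medina_Revoy_1985, Conti_delBarco_Rossi_2021}.

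The main obstacle is completeness of this finite but lengthy case analysis: one must be certain that every algebra on the (sizeable) dimension $7$, $8$ and $9$ lists is in fact caught by one of the methods. The delicate cases are the characteristically nilpotent Lie algebras — the very reason the filtration construction is introduced — for which there is no a priori guarantee that a suitable filtration exists; indeed, some Lie algebras admit no basis-adapted filtration of the required type, so for those one genuinely needs either a filtration not adapted to the classification basis or a separate argument. Checking the combinatorial conditions of Theorem~\ref{thm:foad} and Theorem~\ref{thm:ffoad} on each individual algebra is routine but voluminous, and is best organized as a computer-assisted enumeration; the real content of the proof lies in confirming that the union of the two constructions, together with the small number of ad hoc cases, exhausts the classification.
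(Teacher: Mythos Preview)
Your proposal is correct and follows essentially the same strategy as the paper: exhaust the relevant classification lists by first applying the grading criterion of Theorem~\ref{thm:foad}, then the filtration criterion of Theorem~\ref{thm:ffoad}, and finally dispatching a small residual set by an explicit ansatz, all organized as a computer-assisted case analysis. The only point worth noting is that the paper's residual cases (two in dimension $\leq 7$, six nice in dimension $8$, and $31$ nice in dimension $9$) are handled not by double extensions but by $\sigma$-diagonal metrics in the sense of \cite{Conti_Rossi_2019b}, which for nice Lie algebras turn out to be a particularly convenient explicit ansatz.
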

Our proof is based on case-by-case computations with an ad-hoc computer program (\cite{skoll}): for each Lie algebra, we determine either a grading satisfying Theorem~\ref{thm:foad}, a filtration satisfying Theorem~\ref{thm:ffoad}, or an explicit Ricci-flat metric when both methods fail. More precisely, in the situation of Theorem~A, existence of a Ricci-flat metric can be proved using filtrations for all Lie algebras except two, whilst the construction with gradings fails in $22$ cases. For nice Lie algebras of dimension $8$, filtrations apply in all but six cases, and gradings in all but $93$. In dimension $9$ it becomes difficult to determine exactly which Lie algebras  admit a filtration satisfying the hypotheses of Theorem~\ref{thm:ffoad}. Nevertheless, we were able to produce filtrations or gradings implying the existence of a Ricci-flat metric for $6851$ out of the of $6882$ nice nilpotent Lie algebras of dimension $9$, leaving $31$ for which a Ricci-flat metric had to be produced explicitly.

\section{Gradings, filtrations and the Ricci-flat condition}
In this section we introduce two related but different constructions to produce Ricci-flat metrics on nilpotent Lie algebras. Our aim is to prove existence of a Ricci-flat metric based on the existence of a grading or filtration of a particular class, regardless of the structure constants. The construction is based on the formula for the Ricci tensor
\[\ric(v,w)=\frac12 g(dv^\flat,dw^\flat)-\frac12g(\ad v,\ad w),\]
which holds on any nilpotent Lie algebra (see e.g. \cite{Besse,Conti_Rossi_2019}; for arbitrary Lie algebras, two extra terms appear depending on the one-form $X\mapsto \Tr \ad X$ and the Killing form). Specifically, we construct metrics such that $\ad\g\subset\g^*\otimes\g$ and $d\g^*$ are isotropic subspaces, i.e. the metric restricts trivially. The metrics will take the antidiagonal form
\begin{equation}
\label{eqn:antidiagonal}
\sum_{i=1}^n e^i\otimes e^{\hat i}=\sum_{i=1}^n e^i\otimes e^{n+1-i},
\end{equation}
relative to some basis $e^i$ of $\g^*$. Throughout the paper, $\{e^i\}$ will denote a basis of $\g^*$, $\{e_i\}$ the dual basis of $\g$, and we will write $\hat i$ for $n+1-i$.

Our construction is based on two lemmas which are best formulated in terms of graded vector spaces. Let $A$ be a torsion-free abelian group; a grading over $A$ of a vector space $V$ is a decomposition \mbox{$V=\bigoplus_{\alpha\in A}V_\alpha$}. If $V_\alpha$ is not the trivial subspace,  $V_\alpha$ is called a \emph{layer} and $\alpha$ is a \emph{weight}. We will only consider finite-dimensional vector spaces, so it will be no loss of generality to assume that $A$ is finitely generated. There  would be no loss of generality by assuming that $A$ is a subgroup of $\R$, but we will not do so as it would confuse matters rather than simplifying.

Given a graded vector space $V=\bigoplus_{\alpha\in A}V_\alpha$, we say that the ordered basis  $e_1,\dotsc, e_n$ is \emph{adapted} to the grading if each $V_\alpha$ is spanned by  elements of the basis. To an adapted basis, we associate the \emph{weight sequence} $w_1,\dotsc, w_n$ such that  $e_i$ has weight $w_i$. Thus, each $\alpha\in A$ appears $\dim V_\alpha$ times in the weight sequence; we will say that $\dim V_{\alpha}$ is the \emph{multiplicity} of the weight $\alpha$. We will write  $V_\alpha\wedge V_\beta$ for the image in $\Lambda^2 V$ of $V_\alpha\otimes V_\beta$. Therefore, $\Lambda^2V$ is a direct sum of the spaces $V_\alpha\wedge V_\beta$, as $\{\alpha,\beta\}$ varies among unordered pairs of weights.

Recalling that the index of a scalar product of signature $(p,q)$ is defined as the difference $p-q$, it is clear that scalar products which can be written in the form \eqref{eqn:antidiagonal} are exactly those of index $0$ or $1$.

We will need the following elementary result.
\begin{lemma}
\label{lemma:isotropic}
Let $V$ be a vector space, $\dim V\neq 2$, and let $F$ be an element of $\Lambda^2V^*$. Then there is a scalar product on $V$ of index $0$ or $1$ such that the two-form $F$ is isotropic. Furthermore, if $V$ is the direct sum $U\oplus W$ of subspaces of the same dimension and $F\in U^*\wedge W^*$, the scalar product can be chosen in such a way that  $U$ and $W$ are isotropic.
\end{lemma}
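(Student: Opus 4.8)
The plan is to produce the scalar product by writing down its Gram matrix in a convenient basis and then checking the two stated properties by direct inspection. The role of the hypothesis $\dim V\neq 2$ is to rule out the one case where a nonzero two-form on a two-dimensional space is automatically symplectic (nondegenerate) and cannot be isotropic for any metric of index $0$ or $1$.

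First I would treat the general statement. Fix $F\in\Lambda^2V^*$. By the standard normal form for alternating bilinear forms, there is a basis $f_1,\dotsc,f_n$ of $V$ in which $F=\sum_{j=1}^{r} f^{2j-1}\wedge f^{2j}$, where $2r=\rank F$ and the remaining vectors $f_{2r+1},\dotsc,f_n$ span the radical of $F$. Now I would choose the scalar product so that each two-plane $\Span{f_{2j-1},f_{2j}}$ ($1\le j\le r$) is a hyperbolic plane on which $F$ restricts to a multiple of the area form; concretely, declare $f_{2j-1}$ and $f_{2j}$ isotropic with $g(f_{2j-1},f_{2j})=1$, make these planes mutually orthogonal, and on the radical part put any nondegenerate form of index $0$ or $1$ in the remaining $n-2r$ coordinates, using a further antidiagonal (hyperbolic) block plus at most one diagonal $\pm1$. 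Isotropy of $F$ then amounts to the assertion that the one-dimensional subspace $\langle F\rangle$ of $\Lambda^2V$ is isotropic for the induced scalar product on $\Lambda^2V$; but with respect to the induced form, $f^{2j-1}\wedge f^{2j}$ is isotropic (each plane being hyperbolic) and the summands lie in mutually orthogonal pieces $V_j\wedge V_k$ of $\Lambda^2V$, so $g(F,F)=\sum_j g(f^{2j-1}\wedge f^{2j},f^{2j-1}\wedge f^{2j})=0$. The only place where $\dim V\neq 2$ is used: when $n=2$ and $F\neq 0$ we would have $r=1$ and no radical, which is fine; the genuinely bad configuration excluded is not this one — in fact for $n=2$ the hyperbolic-plane construction still works, so I would simply note that the statement is vacuous or trivially handled for $n\le 1$ and the exclusion $\dim V\neq 2$ is inherited from the second part, where it is essential.

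For the refined statement, suppose $V=U\oplus W$ with $\dim U=\dim W=m$ and $F\in U^*\wedge W^*$. Here I would diagonalize $F$ respecting the splitting: viewing $F$ as a linear map $U\to W^*$, pick bases $u_1,\dotsc,u_m$ of $U$ and $w_1,\dotsc,w_m$ of $W$ such that $F=\sum_{j=1}^{s} u^j\wedge w^j$ for some $s\le m$ (singular value / rank normal form for the pairing between $U$ and $W$). Now define the scalar product to be the antidiagonal form pairing $u_j$ with $w_j$: set $g(u_j,w_k)=\delta_{jk}$, $g(u_j,u_k)=0$, $g(w_j,w_k)=0$. This is nondegenerate of index $0$ (a sum of $m$ hyperbolic planes), makes $U$ and $W$ isotropic by construction, and $F$ is isotropic by the same orthogonality-of-blocks computation as above, since $u^j\wedge w^j$ is isotropic in the hyperbolic plane $\Span{u_j,w_j}$ and distinct blocks sit in orthogonal components of $\Lambda^2V$. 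The hypothesis $\dim V\neq 2$ is what guarantees we are never forced into the excluded two-dimensional anomaly; concretely, when $\dim V=2$ one is forced to take $m=1$, $U=\langle u_1\rangle$, $W=\langle w_1\rangle$, and if $F=u^1\wedge w^1\neq 0$ then the metric must pair $u_1$ with $w_1$ nontrivially (else $U$, $W$ are not complementary isotropics filling $V$) — but then $g(F,F)=-g(u_1,u_1)g(w_1,w_1)+g(u_1,w_1)^2$ cannot be made to vanish while keeping $g$ nondegenerate of index $0$ or $1$ and $U,W$ isotropic, since isotropy of $U,W$ kills the first term. Hence the restriction $\dim V\neq 2$.

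The only step with any subtlety is bookkeeping the induced inner product on $\Lambda^2V$ and confirming that the chosen hyperbolic blocks are mutually orthogonal there, so that $g(F,F)$ is a sum of the obviously-vanishing diagonal block contributions; everything else is the classical normal form for alternating forms (respectively for a pairing of two subspaces) together with the elementary fact that a hyperbolic plane's area form is null. I expect no real obstacle beyond writing the Gram matrices carefully and handling the low-dimensional edge cases $\dim V\in\{0,1\}$ separately, where the statement is trivial.
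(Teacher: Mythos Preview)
There is a genuine gap: your central claim that the area form on a hyperbolic plane is isotropic is false, and this is precisely what the hypothesis $\dim V\neq 2$ is there to exclude. Concretely, if $g(f_{2j-1},f_{2j})=1$ and both basis vectors are null, then on the dual we also have $g(f^{2j-1},f^{2j})=1$ and $g(f^{2j-1},f^{2j-1})=g(f^{2j},f^{2j})=0$, so
\[
g(f^{2j-1}\wedge f^{2j},\,f^{2j-1}\wedge f^{2j})
= g(f^{2j-1},f^{2j-1})\,g(f^{2j},f^{2j})-g(f^{2j-1},f^{2j})^2=-1.
\]
Since your cross terms between distinct blocks vanish (as you argue), your construction actually gives $g(F,F)=-r$ in the first part and $g(F,F)=-s$ in the second, not zero. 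You in fact compute exactly this in your $\dim V=2$ paragraph and conclude correctly there that it cannot vanish; the same computation persists block by block for larger $m$, so the argument breaks for all ranks $\geq 1$, not just in dimension~$2$. Your remark that ``for $n=2$ the hyperbolic-plane construction still works'' should have been a red flag, since the lemma genuinely fails for $n=2$.

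The fix requires coupling the blocks rather than treating them independently. The paper does this in the second part by writing $g(F,F)=2\Tr\bigl((f^{-1}\circ f_F)^2\bigr)$, where $f\colon U\to W^*$ is the isomorphism defining the pairing, and then choosing $f$ so that $f^{-1}\circ f_F$ is, for instance, a cyclic permutation matrix on the nonkernel (or a suitable $2\times 2$ traceless form when the rank is~$2$); this forces the trace of the square to vanish but needs $\dim U\geq 2$. The first part is then reduced to the second by splitting $V$ appropriately, with a separate ad hoc metric in dimension~$3$. The essential point you are missing is that isotropy of $F$ cannot be achieved by a block-diagonal metric aligned with the Darboux decomposition of $F$; one must mix the symplectic planes.
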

\begin{proof}
We first prove the second part. Suppose $V=U\oplus W$, with $\dim U=\dim W\geq 2$, and such that $F\in U^*\wedge W^*$. We can associate to $F$ a linear map
\[f_F\colon U\to W^*, \quad u\mapsto F(u,\cdot).\] Let $f\colon U\to W^*$ be any isomorphism; a scalar product is induced,
\[g(u+w,u'+w')=f(u)w'+f(u')w,\]
with obvious notation. We have
\[g(F,F)=\Tr((F^\sharp)^2), \quad g(F^\sharp(v),v')=F(v,v').\]
Thus
\[F^{\sharp}|_U= f^{-1}\circ f_F\colon U\to U, \quad F^\sharp|_W=-\tran f^{-1}\circ \tran f_F.\]
It follows that
\[g(F,F)=2\Tr((f^{-1}\circ f_F)^2).\]
It is now clear that, $f_F$ being fixed, we can choose $f$ in such a way that $\Tr((f^{-1}\circ f_F)^2)$ is zero. Indeed, if $f_F$ has rank $r\geq 3$, we can assume that, relative to some basis $e_1,\dotsc, e_n$,
\[f^{-1}\circ f_F=e^1\otimes e_2+\dots + e^{r-1}\otimes e_r+ e^r\otimes e_1;\]
if $f_F$ has rank $2$, we can assume it takes the form
\[e^1\otimes (e_1+e_2)+e^2\otimes (-e_1+e_2).\]
If $f_F$ has rank $1$,  we can assume it takes the form $e^1\otimes e_2$, as $U$ has dimension at least two. In each case, the square has trace zero; thus, the second part of the statement is proved.

For the first part, observe that if $V$ has even dimension greater than two, it is always possible to write $V=U\oplus W$, with $\dim U=\dim W\geq 2$, and such that $F\in U^*\wedge W^*$, so we can construct the metric as above.

If $V$ has odd dimension greater than three, we can write $V=U\oplus W\oplus\R$, with $\dim U=\dim W\geq 2$ and $F\in U^*\wedge W^*$. We can then construct a neutral metric on $U\oplus W$ as above and extend it to $V$ by declaring that the factor $\R$ is orthogonal to $U\oplus W$, with a positive definite metric; this metric has index one and makes $F$ isotropic.

Finally, if $V$ has dimension three, we can fix a basis $e_1,e_2,e_3$ such that $F$ is a multiple of  $e^{12}$, and choose $g=e^1\odot e^3+e^2\otimes e^2$.
\end{proof}

Our strategy to obtain Ricci-flatness is to consider a class of metrics such that $\ad \g$ is isotropic and all scalar products $g(de^i,de^j)$ vanish except possibly $g(de^n,de^n)$; choosing the metric as in Lemma~\ref{lemma:isotropic}, with $F=de^n$, will then imply that $d\g^*$ is also isotropic. Specifically, we will use the following:
\begin{lemma}
\label{lemma:multiplicities}
Let $V$ be a graded vector space with weight sequence $w_1,\dotsc, w_n$. Suppose that
\begin{compactenum}
\item  if $w_i+w_{\hat i}=w_n$ and $w_i\neq w_{\hat i}$, then at least one of $w_i,w_{\hat i}$ has multiplicity at least two;
\item  if $w_i+w_{\hat i}=w_n$ and $w_i= w_{\hat i}$ with $i$ and $\hat i$ distinct, then $w_i$ has multiplicity greater than two.
\end{compactenum}
Then for every element $F$ of $\sum_{\alpha+\beta=w_n}V_{\alpha}\wedge V_\beta$ there is an adapted basis $e_1,\dotsc, e_n$ with weight sequence $w_1,\dotsc, w_n$ such that $F$ is isotropic relative to the antidiagonal metric $\sum e^i\otimes e^{\hat i}$.
\end{lemma}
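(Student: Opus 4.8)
The goal is to pick, for each weight $\alpha$, an ordering of a basis of $V_\alpha$ so that when the $e_i$ are laid out in the global weight sequence $w_1,\dots,w_n$ the antidiagonal pairing $i\leftrightarrow\hat i=n+1-i$ matches up the layers compatibly, and then to invoke Lemma~\ref{lemma:isotropic} layer-by-layer. The key observation is that the antidiagonal metric $\sum e^i\otimes e^{\hat i}$ pairs the weight-$w_i$ slot with the weight-$w_{\hat i}$ slot, so $g$ restricted to the sum of the two layers $V_{w_i}$ and $V_{w_{\hat i}}$ is a perfect pairing between them (when $w_i\neq w_{\hat i}$) or a nondegenerate form on $V_{w_i}$ itself (when $w_i=w_{\hat i}$). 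Since $F\in\sum_{\alpha+\beta=w_n}V_\alpha\wedge V_\beta$, it decomposes as a sum $F=\sum_{\{\alpha,\beta\}} F_{\alpha\beta}$ over unordered pairs with $\alpha+\beta=w_n$, where $F_{\alpha\beta}\in V_\alpha^*\wedge V_\beta^*$ (identifying $V_\alpha^*$ with the span of the corresponding $e^i$); and $g(F,F)$ splits as a sum of contributions, one from each such pair, because the metric is block-antidiagonal. So it suffices to arrange, pair of weights by pair of weights, that each block contributes zero.

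First I would reduce to understanding which unordered pairs of weights $\{\alpha,\beta\}$ with $\alpha+\beta=w_n$ can actually interact with the antidiagonal pairing, i.e. which pairs have a slot $i$ with $\{w_i,w_{\hat i}\}=\{\alpha,\beta\}$; call these the \emph{active} pairs. For an inactive pair the block $F_{\alpha\beta}$ is paired by $g$ with layers of the wrong weights and contributes nothing regardless of the ordering, so there is nothing to do. For an active pair I would treat two cases. Case (a): $\alpha\neq\beta$. Then in any adapted basis the $\hat{\,\cdot\,}$ involution, restricted to the slots of weights $\alpha$ and $\beta$, sets up a partial matching between the $e_i\in V_\alpha$ and the $e_j\in V_\beta$; since $\dim V_\alpha\geq 2$ or $\dim V_\beta\geq 2$ by hypothesis (1), I have enough room in the larger layer to order its basis so that the matching identifies a full copy of the smaller layer with a subspace of the larger, and then $g$ restricted to $V_\alpha\oplus V_\beta$ is (up to the unmatched part, which is paired with layers of other weights or is just extra) exactly of the split form $U\oplus W$ with $F_{\alpha\beta}\in U^*\wedge W^*$ that the second part of Lemma~\ref{lemma:isotropic} handles; I would choose the internal ordering of the matched block according to the normal forms (rank $\geq 3$, rank $2$, rank $1$) exhibited in that lemma's proof so that $\mathrm{Tr}((F_{\alpha\beta}^\sharp)^2)=0$. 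Case (b): $\alpha=\beta$, so $2\alpha=w_n$; here hypothesis (2) gives $\dim V_\alpha>2$ (when there genuinely are two distinct slots $i\neq\hat i$ with $w_i=w_{\hat i}=\alpha$), and the antidiagonal pairing restricted to $V_\alpha$ is a nondegenerate symmetric form that I can take of split or near-split type by ordering the basis of $V_\alpha$ antidiagonally; with $\dim V_\alpha\geq 3$ I again have room to put $F_{\alpha\beta}|_{V_\alpha}$, which is skew, into a normal form whose $\sharp$-square has zero trace. (If $i=\hat i$, i.e. $n$ is odd and this is the middle slot, then $w_i=w_{\hat i}$ forces the weight to be $w_n/2$ but there is only one such slot and $F$ has no component there; nothing to do.)

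Once each active block has been normalized independently, I would assemble the global adapted basis by concatenating the per-layer orderings in the order dictated by the weight sequence, check that the choices for different pairs do not conflict (they don't: a given layer $V_\alpha$ participates in at most one active pair, namely $\{\alpha,w_n-\alpha\}$, so its ordering is fixed once), and conclude $g(F,F)=\sum_{\{\alpha,\beta\}\ \mathrm{active}}\mathrm{Tr}((F_{\alpha\beta}^\sharp)^2)=0$, whence $F$ is isotropic. The main obstacle I expect is purely bookkeeping: making precise the claim that the antidiagonal metric restricted to the slots of a pair of weights really does decompose off as an independent orthogonal block of the form required by Lemma~\ref{lemma:isotropic}, including the edge cases where a layer's dimension exceeds that of its partner (so part of it is "wasted" or paired across to yet other weights), where a layer is self-paired, and where $n$ is odd. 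Getting the counting right so that hypotheses (1) and (2) are exactly what is needed to guarantee the rank-$\geq 3$ / rank-$2$ / rank-$1$ dichotomy of Lemma~\ref{lemma:isotropic} can be met inside each block is the delicate part; the actual trace computations are already done in Lemma~\ref{lemma:isotropic}.
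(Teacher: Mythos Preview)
Your approach is essentially the same as the paper's: decompose $F$ into blocks indexed by unordered pairs $\{\alpha,\beta\}$ with $\alpha+\beta=w_n$, observe that these blocks are mutually $g$-orthogonal for the antidiagonal metric, and then choose the adapted basis layer by layer using Lemma~\ref{lemma:isotropic} to make each block isotropic. The paper's proof carries out exactly the bookkeeping you flag as the main obstacle, splitting each $V_\alpha$ into the part that is paired within the $\{\alpha,\beta\}$-block (indexed by a set $I$) and the part paired elsewhere, and treating separately the edge case $|I|=1$ with $\alpha\neq\beta$ (where one kills $F|_K$ by placing the single matched vector in the kernel of $F_{\alpha\beta}$, using multiplicity $\geq 2$) and the edge case $|I|=2$ with $\alpha=\beta$ (where one chooses the subspace $U$ so that $F|_U=0$, using multiplicity $>2$).
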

\begin{proof}
For each weight $\alpha$, we need  to choose a basis  of the layer $V_\alpha$. If $w_n-\alpha$ is not a weight, the basis can be chosen arbitrarily, since $F$ has no component along $V_\alpha\wedge V$.

If $w_n-\alpha$ is a weight $\beta$, there are two cases.
\begin{compactenum}
\item If $\alpha=\beta$, partition the set of indices $1\leq j\leq n$ corresponding to the weight $\alpha$ as the disjoint union of
\[I=\{i\mid 1\leq i\leq n, \alpha=w_i=w_{\hat i}\}, \quad J=\{j\mid 1\leq j\leq n, \alpha=w_j\neq w_{\hat j}\}.\]
By hypothesis, either $I$ contains only one element, or $\abs{I}+\abs{J}>2$. Write $V_\alpha=U\oplus W$, where $\dim U=\abs{I}$, $\dim W=\abs{J}$. If $U$ has dimension two, we can choose $U$ and $W$ so that the restriction of $F$ to $U$ is zero, as $V_\alpha$ has dimension at least three. Otherwise, we can apply Lemma~\ref{lemma:isotropic} and construct a metric on $U$ of index $0$ or $1$ such that $F$ is isotropic. We can therefore choose the basis of $V_\alpha$ by assuming that $\{e_i\mid i\in I\}$ is a basis of $U$ and  $\{e_j\mid j\in J\}$ is a basis of $W$ such that $F|_U$ is isotropic for the antidiagonal metric. Then the component of $F$ in $\Lambda^2 V_{\alpha}$ is isotropic.
\item If $\alpha\neq\beta$, consider the sets
\[I=\{i\mid 1\leq i\leq n, w_i=\alpha, w_{\hat i}=\beta\}, \quad J=\{j\mid 1\leq j\leq n, \text{ either }w_j=\alpha\text{ or } w_{\hat j}=\beta \text{ but not both}\}.\]
We need to choose the elements $e_i, e_{\hat i}$ for $i\in I$ in such a way that the restriction of $F$ to
\[K=\Span{e_i, e_{\hat i}\mid i\in I}\]
is isotropic; the basis elements $e_j$ for $j\in J$ can then be chosen arbitrarily.

If $I$ is empty, there is nothing to prove. If $I$ contains exactly one element $i$, by the hypothesis one of $\alpha$, $\beta$ has multiplicity at least two, so we can choose  $e_i$ in $V_\alpha$ or $e_{\hat i}$ in $V_\beta$ in such a way that $F|_K$ is zero. If $I$ has two or more elements, we can fix subspaces $U\subset V_\alpha$, $W\subset V_\beta$, both of dimension $\abs{I}$, and then apply Lemma~\ref{lemma:isotropic}
to obtain a neutral metric on $U\oplus W$ where $U$, $W$ and $F$ are isotropic. We can then choose bases $\{e_i\mid i\in I\}$ of $U$ and $\{e_{\hat i}\mid i\in I\}$ of $W$ so that the metric becomes antidiagonal.
\end{compactenum}
\end{proof}

We will apply Lemma~\ref{lemma:multiplicities} to two different situations. In the first situation, we consider a graded Lie algebra $\g$; this means that the vector space $\g$ is endowed with a grading $\g=\bigoplus_{\alpha\in A}\g_\alpha$ such that
\[[\g_\alpha,\g_\beta]\subset\g_{\alpha+\beta}.\]
Our sufficient condition for the existence of a Ricci-flat metric will be expressed in terms of a weight sequence $w_1,\dotsc, w_n$; notice that we do not assume that equal terms in the weight sequence appear consecutively.
\begin{theorem}
\label{thm:foad}
Let $\g$ be a nilpotent Lie algebra graded over $A$; suppose that there is a weight sequence $w_1,\dotsc, w_n$ satisfying
\begin{compactenum}[(G1)]
\item \label{G1} if $w_i+w_j=w_k$ and $i\neq j$ then $i,j<k$;
\item \label{G2} if $w_i+w_{\hat i}=w_j$, then $j=n$;
\item \label{G3} if $w_i+w_{\hat i}=w_n$ and $w_i\neq w_{\hat i}$, then one of $w_i,w_{\hat i}$ has multiplicity at least two;
\item \label{G4} if $w_i+w_{\hat i}=w_n$ and $w_i= w_{\hat i}$, then either $i=\hat i$ or $w_i$ has multiplicity greater than two;
\item \label{G5} if $w_i+w_j$ is a weight and $w_{\hat i}+w_{\hat j}$ is a weight, then $w_j=w_{\hat i}$ and $w_i=w_{\hat j}$.
\end{compactenum}
Then $\g$ has a Ricci-flat metric of the form $\sum e^i\otimes e^{\hat i}$, where $e_i$ is in the layer of weight $w_i$.
\end{theorem}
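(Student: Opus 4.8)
The plan is to use the Ricci formula $\ric(v,w)=\tfrac12 g(dv^\flat,dw^\flat)-\tfrac12 g(\ad v,\ad w)$ and to exhibit a metric of the antidiagonal form $\sum e^i\otimes e^{\hat i}$ for which both $\ad\g$ and $d\g^*$ are isotropic, so that $\ric$ vanishes identically. The basis $\{e_i\}$ will be adapted to the grading with $e_i$ of weight $w_i$, but its precise choice inside each layer will be dictated by Lemma~\ref{lemma:multiplicities}. First I would record the effect of the antidiagonal metric on weights: the dual basis element $e^i$ has ``weight'' $-w_i$ in the natural induced grading on $\g^*$, so $de^k$ lives in the span of the $e^i\wedge e^j$ with $w_i+w_j=w_k$, and $\ad e_k=\sum e^i\otimes (\text{something in }\g_{w_i+w_k})$. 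The pairing $g(e^i\otimes e_a,\, e^j\otimes e_b)$ for the antidiagonal metric is nonzero only when $\hat i=j$ (to pair the covector parts, using $g^{-1}=\sum e_i\otimes e_{\hat i}$) and... no: more carefully, $g(\alpha\otimes v,\beta\otimes w)=g^{-1}(\alpha,\beta)\,g(v,w)$, and $g^{-1}(e^i,e^j)\neq 0$ iff $j=\hat i$, while $g(e_a,e_b)\neq 0$ iff $b=\hat a$.

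The core computation has two parts. For $\ad\g$ isotropic: I need $g(\ad e_k,\ad e_l)=0$ for all $k,l$. Writing $\ad e_k=\sum_{i} e^i\otimes [e_k,e_i]$, the pairing picks out terms where the covector indices are antidiagonal, i.e. we get a sum over $i$ of $g([e_k,e_i],[e_l,e_{\hat i}])$ (up to signs). Now $[e_k,e_i]\in\g_{w_k+w_i}$ and $[e_l,e_{\hat i}]\in\g_{w_l+w_{\hat i}}$; for these to pair nontrivially under the antidiagonal metric on $\g$ we'd need $w_k+w_i$ and $w_l+w_{\hat i}$ to be ``antidiagonal weights'' of each other, i.e. their sum to be $w_n$. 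Condition (G\ref{G5}), applied with the roles of the indices suitably assigned, is exactly what forces these structure constants to involve a basis element whose antidiagonal partner is killed — more precisely (G\ref{G5}) says that $w_k+w_i$ and $w_l+w_{\hat i}$ cannot both be weights unless $w_k=w_{\hat l}$ and $w_i=w_{\hat{\hat i}}=w_i$, which pins the two brackets into the same layer and reduces the pairing to a trace that I can arrange to vanish; and (G\ref{G1}) guarantees $w_k+w_i>w_k,w_i$ so brackets land strictly higher, preventing degenerate coincidences. I expect several sign bookkeeping subtleties here, and the need to treat $i=\hat i$ separately.

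For $d\g^*$ isotropic: by the remark preceding the theorem I only need $g(de^i,de^j)=0$ except possibly $g(de^n,de^n)$. Again $de^k$ is a combination of $e^a\wedge e^b$ with $w_a+w_b=w_k$; pairing $de^k$ with $de^l$ under the antidiagonal metric on $\Lambda^2\g^*$ requires matching $\{a,b\}$ with $\{\hat a',\hat b'\}$ where $w_{a'}+w_{b'}=w_l$, forcing $w_a+w_b$ and $w_l$ to satisfy $(w_a+w_b)+w_l=$ ... and (G\ref{G2}) says that whenever $w_a+w_{\hat a}=w_j$ we must have $j=n$, which is what confines all surviving pairings to $g(de^n,de^n)$. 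So after these two steps the only obstruction to Ricci-flatness is the single number $g(de^n,de^n)$, i.e. the isotropy of the one two-form $F:=de^n\in\sum_{\alpha+\beta=w_n}\g^*_{-\alpha}\wedge\g^*_{-\beta}$; here I invoke Lemma~\ref{lemma:multiplicities} with $F=de^n$, whose hypotheses are precisely (G\ref{G3}) and (G\ref{G4}) transported to the dual grading (the multiplicity of a weight $\alpha$ in $\g$ equals that of $-\alpha$ in $\g^*$), to choose the basis inside each layer so that $F$ is isotropic for the antidiagonal metric.

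The main obstacle I anticipate is the simultaneous fine-tuning of the basis: Lemma~\ref{lemma:multiplicities} chooses bases within layers to kill $g(de^n,de^n)$, but I must check that \emph{this same choice} does not spoil isotropy of $\ad\g$ or introduce a nonzero $g(de^i,de^j)$ for $i\neq n$ — in other words, that conditions (G\ref{G1}), (G\ref{G2}), (G\ref{G5}) make the vanishing of those pairings \emph{independent of the basis chosen inside each layer}, so that the two constructions do not interfere. Verifying this independence — essentially that the relevant structure-constant contractions vanish purely for weight-combinatorial reasons, before any basis is fixed — is where the real content lies; once it is established, the theorem follows by assembling the isotropy of $\ad\g$ and of $d\g^*$ into $\ric\equiv 0$.
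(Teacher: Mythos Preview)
Your overall architecture matches the paper's: show $\ad\g$ and $d\g^*$ are isotropic, reduce the latter to $g(de^n,de^n)=0$, then invoke Lemma~\ref{lemma:multiplicities}. However, you have swapped the roles of (G1) and (G5), and your use of (G5) for the $\ad$ term does not go through. Condition (G5) concerns sums $w_p+w_q$ and $w_{\hat p}+w_{\hat q}$ where \emph{both} indices are hatted in the second expression; in your pairing $g([e_k,e_i],[e_l,e_{\hat i}])$ the relevant sums are $w_k+w_i$ and $w_l+w_{\hat i}$, with $l$ unrelated to $\hat k$, so (G5) does not yield $w_k=w_{\hat l}$. The paper's argument for $\ad\g$ uses only (G1): since $[e_i,e_j]\in\g_{w_i+w_j}$ and every index $k$ with $w_k=w_i+w_j$ satisfies $i,j<k$, one has $\ad\g\subset\Span{e^i\otimes e_k\mid i<k}$, and this span is isotropic for the antidiagonal metric because the only element pairing nontrivially with $e^i\otimes e_k$ is $e^{\hat i}\otimes e_{\hat k}$, which lies outside the span when $i<k$.

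This also dissolves the ``main obstacle'' you anticipate: isotropy of $\ad\g$ holds for \emph{every} adapted basis with the given weight sequence, so there is nothing to reconcile with the basis choice coming from Lemma~\ref{lemma:multiplicities}. Conversely, (G5) is essential for the $d\g^*$ step, where you invoke only (G2). If $g(de^k,de^l)\neq0$ then $de^k$ contains some $e^{ab}$ and $de^l$ contains $e^{\hat a\hat b}$, so $w_a+w_b$ and $w_{\hat a}+w_{\hat b}$ are both weights; it is (G5) that gives $w_b=w_{\hat a}$, whence $w_k=w_a+w_b=w_a+w_{\hat a}$, and only then does (G2) force $k=n$. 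Without (G5) you cannot reach the hypothesis of (G2). Once these roles are corrected, the vanishing of all pairings except $g(de^n,de^n)$ is purely weight-combinatorial and basis-independent, and Lemma~\ref{lemma:multiplicities} finishes the proof exactly as you say.
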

\begin{proof}
As a first step, observe that relative to a metric of the form $\sum e^i\otimes e^{\hat i}$, where $\{e_i\}$ is any adapted basis with weight sequence $w_1,\dotsc, w_n$, the space $\ad\g$ is isotropic. Indeed,
\begin{equation}
 \label{eqn:adgincreasing}
 \ad\g\subset\Span{e^i\otimes e_{k}\mid w_i+w_j=w_k \text{ for some $j$}}\subset \Span{e^i\otimes e_k\mid i<k}.
\end{equation}
Since every element $e^i\otimes e_k$ has scalar product zero with every $e^j\otimes e_l$ except $e^{\hat i}\otimes e_{\hat k}$, which does not appear in the right hand side of \eqref{eqn:adgincreasing}, we see that $\ad\g$ is indeed isotropic.

We therefore need to show that the adapted basis $\{e_i\}$ can be chosen in such a way that the space $d\g^*$ is also isotropic. We first show that $g(dx,dy)=0$ if one of $x,y$ has weight $w_i\neq w_n$. Indeed, suppose that $x\in\g_\alpha^*$, $y\in\g_\beta^*$ and $g(dx,dy)$ is not zero. By construction, $dx$ is a linear combination of elements $e^i\wedge e^j$ where $w_i+w_j=\alpha$, and $dy$ must contain some corresponding element $e^{\hat i}\wedge e^{\hat j}$ with $w_{\hat i}+w_{\hat j}=\beta$. By (G5), this implies that $w_i+w_j=w_i+w_{\hat i}$, so  by (G2)
$\alpha=w_n$ and it has multiplicity one, and similarly for $\beta$.

Thus, for any adapted basis $e_1,\dotsc, e_n$ with weight sequence $w_1,\dotsc, w_n$ we have that $d\g^*$ is isotropic if and only if so is $de^n$.
Thanks to  (G3) and (G4), it now suffices to apply Lemma~\ref{lemma:multiplicities}.
\end{proof}
The construction of Theorem~\ref{thm:foad} has the advantage that it depends on the grading in a purely combinatorial way, and gradings can be determined by computing a maximal split torus in the Lie algebra of derivations (see  Section~\ref{sec:enumerategradings}). It is therefore possible in principle to determine exactly whether a given Lie algebra has a metric of this type or not, and the computation does not depend on the basis in which the Lie algebra is given. In sight of constructing Ricci-flat metrics on nilpotent Lie algebras, however, the construction is not sufficiently general. For instance, it does not apply to Lie algebras without nontrivial gradings, or such that every grading has $0$ as a weight, which violates (G1); however, such Lie algebrs may well admit Ricci-flat metrics, and indeed we are not aware of any nilpotent Lie algebra that does not admit a Ricci-flat metric. We refer to Section~\ref{sec:enumerategradings} for other examples of Lie algebras where Theorem~\ref{thm:foad} does not apply.

With this motivation, we introduce a second construction, which makes use of
\emph{filtered} Lie algebras rather than graded. Precisely, on a Lie algebra $\g$ we will consider filtrations of $\g$ as a vector space taking  the form
\[\g=L_1\supset \dots \supset L_N,\]
with the property that $[L_i,L_j]\subset L_{i+j}$, and the convention that  $L_{k}=0$ for $k>N$. Such a filtration will be called a \emph{positive filtratio}n, to distinguish from  filtrations of the form $\g=L_{-1}\subset L_0\subset\dots$,  also considered in the literature (see \cite{Kobayashi_Nagano_1964}); we will say that $\g$ is \emph{positively filtered}.

Notice that we do not assume the inclusions to be strict. With this convention, a positively graded Lie algebra $\g=\bigoplus_{h>0} \g_h$  is positively filtered by setting $L_w=\bigoplus_{h\geq w}\g_h$. Even in the absence of a nontrivial grading, a nilpotent Lie algebra always admits positive filtrations, for instance the upper and lower central series. Conversely, the existence of a positive filtration forces the Lie algebra to be nilpotent, since taking Lie brackets in sequence gives zero after $N$ steps.

We will say that a basis $e_1,\dotsc, e_n$ is \emph{adapted} to the filtration if each $L_w$ is the span of the last $\dim L_w$ elements. Given an element of $\g$, its \emph{weight} is the largest $w$ such that $L_w$ contains the element. Given an adapted basis, we will write $w_k$ for the weight of $e_k$. Thus, if $[e_i,e_j]=e_k$, then $e_k$ is contained in $L_{w_i+w_j}$, i.e. $w_i+w_j\leq w_k$. Conversely, given a basis $\{e_i\}$ of a Lie algebra $\g$, one can give a filtration by assigning weights $w_i$ in such a way that
\begin{equation}
 \label{eqn:filtration}
w_i+w_j\leq w_k \text{ whenever } de^k(e_i,e_j)\neq0,
 \end{equation}
and then  declaring that $L_w$ is spanned by elements with weight $\geq w$.
\begin{theorem}
 \label{thm:ffoad}
Let $\g$ be a positively filtered nilpotent Lie algebra; let $e_1,\dotsc,e_n$ be an adapted basis with $e_i$ of weight $w_i$. Suppose that
\begin{compactenum}[(F1)]
\item \label{F1} $0<w_1\leq w_2\leq \dotsc \leq w_n$;
\item \label{F2} $w_i+w_{\hat i}\geq w_{n}$, $w_i+w_{\hat i}> w_{n-1}$;
\item \label{F3} if $w_i+w_{\hat i}=w_n$ and $w_i\neq w_{\hat i}$, then one of $w_i,w_{\hat i}$ has multiplicity at least two;
\item \label{F4} if $w_i+w_{\hat i}= w_n$ and $w_i= w_{\hat i}$, then either $i=\hat i$ or $w_i$ has multiplicity greater than two.
\end{compactenum}
Then the metric $\sum e^i\otimes e^{\hat i}$ is Ricci-flat.
\end{theorem}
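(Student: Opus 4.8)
The plan is to adapt the proof of Theorem~\ref{thm:foad}, using the same formula $\ric(v,w)=\frac12 g(dv^\flat,dw^\flat)-\frac12 g(\ad v,\ad w)$, valid on every nilpotent Lie algebra. Since $v\mapsto v^\flat$ is onto $\g^*$ and $\xi\mapsto d\xi$ is onto $d\g^*$, the metric $g=\sum e^i\otimes e^{\hat i}$ is Ricci-flat if and only if the subspaces $\ad\g\subset\g^*\otimes\g$ and $d\g^*\subset\Lambda^2\g^*$ are both isotropic for the induced scalar products. The isotropy of $\ad\g$ is literally the argument of Theorem~\ref{thm:foad}: if $de^k(e_i,e_j)\neq0$ then $[e_i,e_j]\in L_{w_i+w_j}$ has a nonzero component along $e_k$, so $w_k\geq w_i+w_j>w_j$ by (F1), and as the weights are nondecreasing this forces $k>j$; hence $\ad\g\subset\Span{e^j\otimes e_k\mid j<k}$, and this is isotropic because $g(e^j\otimes e_k,e^p\otimes e_q)$ can only be nonzero for $(p,q)=(\hat j,\hat k)$, which is incompatible with $j<k$, $p<q$. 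This step uses only that the weight sequence is nondecreasing, hence it survives any later adjustment of the basis inside the weight layers.

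Next I would reduce the isotropy of $d\g^*$ to a single scalar product by means of (F2). If $g(de^k,de^l)\neq0$, then $de^k$ has a component along some $e^i\wedge e^j$ and $de^l$ one along $e^{\hat i}\wedge e^{\hat j}$, so $w_i+w_j\le w_k$ and $w_{\hat i}+w_{\hat j}\le w_l$; adding $w_i+w_{\hat i}\ge w_n$ and $w_j+w_{\hat j}\ge w_n$ (the first inequality of (F2)) gives $w_k+w_l\ge 2w_n$, whence $w_k=w_l=w_n$ and every inequality above is an equality, in particular $w_i+w_{\hat i}=w_n$. If $w_n$ has multiplicity at least two, then $w_{n-1}=w_n$, contradicting $w_i+w_{\hat i}>w_{n-1}$ (the second inequality of (F2)); so in that case $d\g^*$ is already isotropic and the proof is complete. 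If $w_n$ has multiplicity one, then $w_k=w_n$ forces $k=n$, so the only scalar product that may fail to vanish is $g(de^n,de^n)$.

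To kill $g(de^n,de^n)$ I would invoke Lemma~\ref{lemma:multiplicities}; this is where (F3)--(F4) are used. Endow $\g^*$ with the grading by the subspaces $(\g^*)_\alpha:=\Span{e^i\mid w_i=\alpha}$ and split $de^n=F+R$, where $F$ is the component of $de^n$ in $\sum_{\alpha+\beta=w_n}(\g^*)_\alpha\wedge(\g^*)_\beta$ and $R$ collects the terms $e^i\wedge e^j$ with $w_i+w_j<w_n$. Repeating the estimate above, $g(F,R)=g(R,R)=0$ (a nonzero contribution would again force $w_{\hat i}+w_{\hat j}\ge w_n$ for indices with $w_i+w_j<w_n$), so $g(de^n,de^n)=g(F,F)$. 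Viewing $\g^*$ as a graded vector space with weight sequence $w_1,\dots,w_n$, hypotheses (F3) and (F4) are exactly conditions (1) and (2) of Lemma~\ref{lemma:multiplicities} applied to $F\in\sum_{\alpha+\beta=w_n}(\g^*)_\alpha\wedge(\g^*)_\beta$, so the Lemma provides an adapted basis of $\g^*$ — equivalently, dualizing, a new adapted basis of $\g$ with the same weights — making $F$ isotropic for the antidiagonal metric. A layerwise change of basis on $\g^*$ dualizes to a layerwise change on $\g$, which preserves the filtration, and since $w_n$ has multiplicity one the weight-$w_n$ layer of $\g^*$ is the line $\R e^n$, so $e^n$ is only rescaled. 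Thus, for this basis, $g(de^n,de^n)=g(F,F)=0$, $d\g^*$ is isotropic, and with the first step $\ric=0$.

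The part I expect to require the most care is this last manoeuvre: applying Lemma~\ref{lemma:multiplicities} on $\g^*$ rather than on $\g$, checking that the antidiagonal metric it hands back on $\g^*$ is the one dual to $\sum e^i\otimes e^{\hat i}$, and verifying that the lower-order part $R$ of $de^n$ is harmless — all of which hinge on keeping the two clauses of (F2) separate (the first yielding $w_i+w_{\hat i}=w_n$ and controlling $R$, the second disposing of the multiplicity $\ge2$ case). The isotropy of $\ad\g$ and the reduction step are routine transcriptions of the graded argument.
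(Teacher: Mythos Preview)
Your proposal is correct and follows essentially the same approach as the paper: show $\ad\g$ is isotropic from (F1), reduce via (F2) to the single scalar product $g(de^n,de^n)$, then invoke Lemma~\ref{lemma:multiplicities} using (F3)--(F4) to adjust the basis within layers. The only cosmetic difference is the projection used at the last step: the paper projects $de^n$ onto the span of those $e^{ij}$ with $w_j=w_{\hat i}$ (having first derived this extra equality from the chain of inequalities), whereas you project onto $\sum_{\alpha+\beta=w_n}(\g^*)_\alpha\wedge(\g^*)_\beta$ directly; both images lie in that sum and both capture all contributing terms, so the applications of Lemma~\ref{lemma:multiplicities} are equivalent. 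Your explicit remarks on applying the lemma to $\g^*$ versus $\g$, and on the multiplicity-$\ge2$ case for $w_n$, make precise points the paper leaves implicit.
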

\begin{proof}
Since $\g$ is filtered over positive integers,  whenever $e_i$ has weight $k$, $[\g,e_i]$ is contained in $L_{k+1}$. Because the basis is adapted, $L_{k+1}$ is spanned by $e_h,\dotsc, e_n$, and the fact that $e_i$ has weight $k$ implies $L_{k+1}\subset \Span{e_{i+1},\dotsc, e_n}$. This shows that $\ad\g^*\subset\Span{e^i\otimes e_j\mid i<j}$. Since the metric is antidiagonal, $\ad \g$ is isotropic.

We need to show that $d\g^*$ is also isotropic. Let $de^k$, $de^h$ be nonorthogonal two-forms;  this means that $de^k$ has a component of the form $e^{ij}$ such that $de^h$ has a component of the form $e^{\hat i\hat j}$. In other words, we have
\[de^k(e_i,e_j)\neq 0, \quad de^h(e_{\hat i}, e_{\hat j})\neq0.\]
This implies that $w_i+w_j\leq w_k$, $w_{\hat i}+w_{\hat j}\leq w_h$. Taking the sum and using (F2), we find
\[2w_{n}\leq w_i+w_{\hat i}+w_j+w_{\hat j}\leq w_k+w_h,\]
and equality holds by (F1); (F2) then implies that $h=k=n$ and $w_n=w_i+w_{\hat i}=w_j+w_{\hat j}$. Thus,  $w_{\hat i}=w_j$, $w_{\hat j}=w_i$.

If $\pi$ denotes the projection which sends each $e^{ij}$ to itself if $w_j=w_{\hat i}$ and zero otherwise, we see that $g(\pi(de^n),\pi(de^n))=g(de^n,de^n)$, and
$d\g^*$ is isotropic if and only if so is $\pi(de^n)$. Considering the grading of $\g$ as a vector space defined by assigning each $e_i$ to the layer of weight $w_i$,  Lemma~\ref{lemma:multiplicities} shows that the adapted basis can be modified so that $\pi(de^n)$ is isotropic.
\end{proof}

\section{Metrics from gradings}
\label{sec:enumerategradings}
In this section we illustrate how to apply Theorem~\ref{thm:foad} to find a Ricci-flat metric on a fixed Lie algebra.

We first recall that torsion-free gradings of a  Lie algebra are in one-to-one correspondence with split tori in $\Aut(\g)$ (see \cite[Proposition 4.1]{Kochetov_2009}; note that we are working over $\R$). Explicitly, if a split torus with Lie algebra $\lie t$ acts on $\g$ by automorphisms with weights $\alpha\colon\lie t\to\R$, the associated grading is given by the weight spaces $\g_\alpha$.

The Lie group of automorphisms of a Lie algebra is algebraic; hence, its Lie algebra $\Der\g$ admits the decomposition
\[\Der\g=\lie s\oplus \lie a \oplus \lie n,\]
where $\lie s$ is semisimple, $\lie a+\lie n$ is the radical, $\lie n$ is the nilradical, $\lie a$ is a torus commuting with $\lie s$ (see \cite{Chevalley_1947}). Moreover, we can use the trace form $(X,Y)\mapsto \langle X,Y\rangle_{\Tr}=\Tr(XY)$ to write
\[\lie a+\lie n=[\Der\g,\Der\g]^{\perp_{\Tr}}, \quad \lie n=(\Der\g)^{\perp_{\Tr}}.\]
We can therefore identify $\lie a$ by computing the radical and nilradical and then identifying a complement of the latter inside the first. The complement must be chosen so it acts in a semisimple way on $\g$, which generally requires using Jordan decomposition. Borrowing notation from \cite{Nikolayevsky_2011}, we then decompose $\lie a$ as the sum of a compact torus $\lie a_{i\R}$ and a split torus $\lie a_\R$, i.e. a torus acting with imaginary eigenvalues and one acting with real eigenvalues.

\begin{example}
Consider the Lie algebra
\[0,0,e^{12},e^{13},e^{23},e^{25}+e^{14};\]
the notation, which is adapted from \cite{Salamon_2001} and will be used throughout the paper, means that relative to a fixed coframe $e^1,\dotsc, e^6$, the forms $de^1$ and $de^2$ vanish, $de^3=e^1\wedge e^2$ and so on. A straightforward computation using e.g. \cite{skoll} shows that, writing $e_i^j$ for the elements $e^j\otimes e_i$,
\begin{align*}
\Der\g=\operatorname{Span}\biggl\{&e_4^1,e_4^2+e_5^1,e_5^2,-e_1^2+e_2^1-e_4^5+e_5^4,e_6^1,e_6^2,e_4^2+e_6^3,e_3^2+e_4^3+e_6^4,\\
& -e_3^1+e_5^3+e_6^5,\frac{1}{4} e_1^1+\frac{1}{4} e_2^2+\frac{1}{2} e_3^3+\frac{3}{4} e_4^4+\frac{3}{4} e_5^5+e_6^6\biggr\},\\
\lie n=\operatorname{Span}\bigl\{&e_4^1,e_4^2+e_5^1,e_5^2,e_6^1,e_6^2,e_4^2+e_6^3,e_3^2+e_4^3+e_6^4,-e_3^1+e_5^3+e_6^5\bigr\},\\
\lie a_\R=\operatorname{Span}\bigl\{&\frac{1}{4} e_1^1+\frac{1}{4} e_2^2+\frac{1}{2} e_3^3+\frac{3}{4} e_4^4+\frac{3}{4} e_5^5+e_6^6\bigr\},\\
\lie a_{i\R}=\operatorname{Span}\bigl\{&-e_1^2+e_2^1-e_4^5+e_5^4\bigr\}.
\end{align*}
Thus, this Lie algebra admits a unique nontrivial grading,
\[\g_{\frac14}=\Span{e_1,e_2}, \quad \g_{\frac12}=\Span{e_3}, \quad \g_{\frac34}=\Span{e_4,e_5}, \quad \g_1=\Span{e_6}.\]
The weight sequence $\frac14,\frac14,\frac12,\frac12,\frac34,\frac34,1$ satisfies (G1)--(G4): the only way $w_6=1$ can be written as a sum $w_i+w_{\hat i}$ is as $1=w_2+w_5$ or $1=w_4+w_4$. In the first case, $w_2=\frac14$ has multiplicity two; in the second case, $i=\hat i$. It follows that there is an antidiagonal Ricci-flat metric, which in this case takes the form
\[e^2\odot e^6+e^1\odot e^5+e^3\odot e^4.\]
\end{example}
\begin{example}
\label{ex:nonnice6}
A somewhat less illuminating but significant example is the 6-dimensional Lie algebra
\[0,0,0,e^{12},e^{14},e^{15}+e^{23}+e^{24}.\]
In this case $\lie a$ is one-dimensional, generated by
\[\frac{1}{5} e_1^1+\frac{2}{5} e_2^2+\frac{3}{5} e_3^3+\frac{3}{5} e_4^4+\frac{4}{5} e_5^5+e_6^6.\]
The associated weight sequence $\frac{1}{5},\frac{2}{5},\frac{3}{5},\frac{3}{5},\frac{4}{5},1$ satisfies (G1)--(G5), and the antidiagonal metric relative to the basis $e_1,\dotsc, e_6$ is Ricci-flat.

The relevance of this example is that it is the only nilpotent Lie algebra of dimension $6$ which does not admit a nice basis (see \cite{Lauret_Will_2013}); in particular, the arrow-breaking construction of \cite{Conti_delBarco_Rossi_2021} does not apply.
\end{example}

As we observed below Theorem~\ref{thm:foad}, if the maximal split torus $\lie a_\R$ is trivial, or it acts with a weight of $0$, it is not possible to find a grading that satisfies (G1)--(G5). Even if all weights are nonzero, a grading satisfying (G1)--(G5) may fail to exist. We give two examples below.
\begin{example}
On the Lie algebra
\[0,0,0,0,- e^{12},e^{15},e^{25}+e^{34},e^{16},e^{56}+e^{28}+e^{13}\]
a maximal split torus in the space of derivations is given by
\begin{equation*}
 \label{eqn:nontrivialgrading}
 \diag(d_1,\dotsc, d_9)=\diag\bigl(2 \lambda_8-\lambda_9,-3\lambda_8+3\lambda_9,-2 \lambda_8+4 \lambda_9,-2 \lambda_8+ \lambda_9,- \lambda_8+2 \lambda_9,\lambda_8+ \lambda_9,-4 \lambda_8+5 \lambda_9,3\lambda_8,3\lambda_9\bigr).
\end{equation*}
Thus, there is a grading over $\Z\times\Z$ where every basis element $e_i$ has pure degree $d_i$, and one assumes $\lambda_8,\lambda_9$ to be independent generators, and one can obtain gradings over $\Z$ by again declaring that $e_i$ has degree $d_i$ as above, but replacing $\lambda_8,\lambda_9$ with arbitrary integers. A grading satisfying (G1)--(G5) can then be obtained by reordering the $\{e_i\}$ and $\{d_i\}$, say
\[E_1=e_{\sigma_1}, \dotsc, E_9=e_{\sigma_9}, \quad w_1=d_{\sigma_1},\dotsc, w_9=d_{\sigma_9}.\]
By direct inspection, we see that
\[d_1+d_7=d_3, \quad d_3+d_4=d_7.\]
It is therefore impossible to satisfy condition (G1), since we should have $\sigma_3<\sigma_7$ and $\sigma_7<\sigma_3$ simultaneously.
\end{example}
\begin{example}
For the nilpotent Lie algebra
\[0,0,0,- e^{12},e^{14},e^{15}-e^{23},e^{34}-e^{16},e^{35}+e^{17},e^{47}+e^{56}+e^{28}+e^{13},\]
the maximal split torus has dimension one, and there is a unique torsion-free grading, associated to
\[\diag(d_1,\dotsc, d_9)=\diag\left(\frac{1}{4},-\frac{1}{8},\frac{3}{4},\frac{1}{8},\frac{3}{8},\frac{5}{8},\frac{7}{8},\frac{9}{8},1\right).\]
Since $d_2+d_9=d_7, d_4+d_7=d_9$, (G1) cannot be satisfied.
\end{example}

As mentioned above, computing $\lie a$ generally requires determining a Jordan decomposition. However, it turns out that almost all Lie algebras appearing in the classifications of nilpotent Lie algebras of dimension less than and equal to $7$ (contained respectively   in \cite{Magnin_1986} and \cite{Gong_1998}) are written in a basis such that for some maximal torus $\lie a$, $\lie a_{\R}$ acts by diagonal matrices and  $\lie a_{i\R}$ by skew-symmetric matrices. There are a few exceptions; in the Appendix we show how the basis should be changed in these instances to obtain the same.

A more essential reason why we do not need to worry about Jordan decompositions here is a property of nice Lie algebras: despite our construction being general, most of the computations in this paper deal with nice Lie algebras, because classifications in dimensions $8$ and $9$ are only available for the nice case. We recall that a basis $\{e_i\}$ is called a \emph{nice basis} if every Lie bracket $[e_i,e_j]$ is a multiple of some $e_k$ and every interior product $e_i\hook de^j$ is a multiple of some $e^h$ (see \cite{Nikolayevsky_2011, Lauret_Will_2011}); a nice Lie algebra is a Lie algebra endowed with a nice basis; a Lie algebra is non-nice if it does not admit a nice basis. An important property of nice Lie algebras is that the diagonal part of a derivation is a derivation (see \cite{Dere_Lauret_2019}): therefore, diagonal derivations form a canonical split torus, which can be used to determine whether (G1)--(G5) hold, regardless of whether it is maximal.

Having fixed a split torus, or equivalently a grading, determining whether a weight sequence satisfying (G1)--(G5) exists is a matter of iterating through weight sequences. The weight sequences associated to a grading are related to one another by permutations, but it is not necessary to consider all permutations. Indeed, weight sequences can be constructed by picking weights in sequence, and imposing at each step that (G1) is satisfied by requiring that the weight being inserted in the sequence should not precede elements yet to be inserted in the partial ordering defined by (G1).

Since nilpotent Lie algebras of dimension $6$ and nilpotent Lie algebras of dimension $7$ admitting a nice basis have already been shown to admit a Ricci-flat metric in \cite{Conti_delBarco_Rossi_2021}, we apply this method to the remaining  $7$-dimensional nilpotent Lie algebras in the classification of \cite{Gong_1998}.
\begin{theorem}
\label{thm:nonnicegradings}
The non-nice nilpotent Lie algebras of dimension $7$ that admit a grading and weight sequence satisfying (G1)--(G5) are precisely those appearing in Table~\ref{tbl:nonnicefoad}.
\end{theorem}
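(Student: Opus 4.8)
The plan is to reduce Theorem~\ref{thm:nonnicegradings} to a finite computation that can be carried out by the ad-hoc program \cite{skoll}, and to organize the proof around explaining why that computation is both correct and exhaustive. First I would recall that the non-nice nilpotent Lie algebras of dimension $7$ form a finite, explicitly known list, obtained by removing from Gong's classification \cite{Gong_1998} the Lie algebras that admit a nice basis (the latter being classified in \cite{Conti_Rossi_2019a}); for each such Lie algebra $\g$ we must decide whether some grading and some weight sequence satisfy (G1)--(G5). The key reduction is that it suffices to examine \emph{one} maximal split torus of derivations: since all gradings correspond to split tori in $\Aut(\g)$ (\cite[Proposition 4.1]{Kochetov_2009}), and any split torus is contained in a maximal one, a grading satisfying (G1)--(G5) exists if and only if a weight sequence with this property can be extracted from the grading given by a maximal split torus $\lie a_\R$, after possibly passing to the (finitely many) coarsenings obtained by composing the weight map with a linear functional on $\lie a_\R$. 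As explained in the text preceding the statement, $\lie a_\R$ is computed from $\Der\g$ via the trace form, using a Jordan decomposition where needed; for the handful of Lie algebras in the classification whose given basis does not diagonalize a maximal torus, the Appendix records the required change of basis.

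Next I would describe the enumeration of weight sequences. Having fixed the grading $\g=\bigoplus_\alpha\g_\alpha$, a weight sequence is just an ordering $w_1,\dotsc,w_n$ of the multiset of weights (with multiplicities $\dim\g_\alpha$) together with a compatible ordering of an adapted basis. Condition (G1) defines a partial order on the indices --- $i,j<k$ whenever $w_i+w_j=w_k$ with $i\neq j$ --- so admissible orderings are precisely the linear extensions of this partial order, which can be generated by the incremental procedure sketched in the paragraph above the statement (insert weights one at a time, never placing a weight before one it must precede). For each candidate linear extension one then checks the remaining finitely many conditions (G2)--(G5), which are purely combinatorial statements about the multiset $\{w_i\}$, its multiplicities, and the pairing $i\leftrightarrow\hat i$. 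If some coarsening of the maximal grading yields a weight sequence passing all five tests, $\g$ goes into Table~\ref{tbl:nonnicefoad}; otherwise Theorem~\ref{thm:foad} does not apply with that torus, and by the maximality argument it does not apply with any grading. The program \cite{skoll} carries out exactly this loop, and the content of the theorem is that its output is the list in Table~\ref{tbl:nonnicefoad}.

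The proof therefore consists of: (i) citing the classification to obtain the finite input list; (ii) justifying the reduction to a single maximal split torus and its finitely many coarsenings; (iii) describing the weight-sequence enumeration and the checks for (G1)--(G5); (iv) reporting that the computation, performed for each of the finitely many Lie algebras, produces precisely the entries of Table~\ref{tbl:nonnicefoad}, with the Ricci-flat metric in each positive case furnished by Theorem~\ref{thm:foad}. The main obstacle is not mathematical depth but rather the bookkeeping in step (ii): one must argue carefully that no grading outside a chosen maximal torus can succeed where all coarsenings of that torus fail. This follows because (G1)--(G5) are monotone in the right way --- any weight sequence adapted to a finer grading, restricted along a functional $\lie a_\R\to\R$, remains a weight sequence for the coarser grading, and the conditions, being about sums and multiplicities of weights, are inherited appropriately --- so that a grading admitting a valid weight sequence always refines, up to the relevant quotient, to one coming from the maximal torus. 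Making this inheritance statement precise, and ensuring the finitely many coarsenings are correctly enumerated (the number of functionals on $\lie a_\R$ modulo the symmetries that preserve the grading is finite but must be bounded explicitly when $\dim\lie a_\R>1$), is where the argument needs the most care; the rest is a finite verification delegated to \cite{skoll}.
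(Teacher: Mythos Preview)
Your overall strategy---compute a maximal split torus, enumerate weight sequences, delegate the finite check to \cite{skoll}---matches the paper's in spirit for the \emph{positive} half, but the paper's proof of the \emph{negative} half is much simpler than the reduction you sketch in step~(ii), and your justification of that reduction contains a genuine gap.

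The paper does not enumerate coarsenings of the maximal grading at all. Instead it observes that the sixteen non-nice seven-dimensional Lie algebras \emph{not} in Table~\ref{tbl:nonnicefoad} split into eleven characteristically nilpotent ones (no nontrivial grading exists, so (G1) is vacuously impossible) and five whose maximal split torus is one-dimensional, hence up to rescaling there is a \emph{unique} nontrivial grading; for each of these five the weights are written down and (G1) or (G2) is seen to fail by inspection. No monotonicity or finiteness-of-coarsenings argument is needed, because $\dim\lie a_\R\leq 1$ in every negative case.

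Your monotonicity claim is problematic on two counts. First, the direction is confused: since every grading is already (up to conjugacy of split tori) a coarsening of the maximal one, the statement ``a grading admitting a valid weight sequence always refines to one coming from the maximal torus'' is a tautology and does not by itself make the search finite. Second, the assertion that (G1)--(G5) ``are inherited appropriately'' when one restricts along a functional is false as stated: passing to a coarsening can create new coincidences $w_i+w_j=w_k$ that break (G1) or (G5), and merges layers in ways that change the hypotheses of (G3)--(G4). A correct finiteness argument would instead note that the linear functionals on $\lie a_\R$ fall into finitely many chambers of a hyperplane arrangement (cut out by the equalities among weights and sums of weights), so only finitely many combinatorially distinct coarsenings need to be tested; you gesture at this but do not carry it out, and in any event the paper sidesteps the whole issue by exploiting the low rank of $\lie a_\R$ in the failing cases.
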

\begin{proof}
A list of the $39$ nilpotent Lie algebras (including some one-parameter families) of dimension $7$ without a nice basis is given in \cite{Conti_Rossi_2019a}; the Lie algebras appearing in Table~\ref{tbl:nonnicefoad} are a subset. A change of basis is needed in some cases in order to obtain a maximal split torus acting diagonally (see Appendix).

Each entry appearing in Table~\ref{tbl:nonnicefoad} gives a Lie algebra written in terms of a fixed basis $e_1,\dotsc, e_7$, the weights of the action of a maximal split torus $\lie t$ relative to this basis, and a weight sequence satisfying (G1)--(G5). If the torus is one-dimensional, this is a list of numbers; otherwise, if $\lie t$ has a basis $H_1,\dotsc, H_k$, the weights are expressed as linear combinations of the dual basis $\lambda_1,\dotsc, \lambda_k$ of $\lie t^*$.

The non-nice nilpotent Lie algebras of dimension $7$ that do not appear in Table~\ref{tbl:nonnicefoad} fall into two classes: eleven characteristically nilpotent Lie algebras, which cannot satisfy (G1)--(G5), and five Lie algebras that admit a unique nontrivial grading; the latter are given below, together with the weights of $e_1,\dotsc, e_7$  relative to the unique grading:
\[
 \begin{array}{ll}
 \g & \text{grading}\\
 \hline
\rule{0pt}{1.1\normalbaselineskip}
0,0,e^{12},e^{13},e^{14},e^{23},e^{16}+e^{24}+e^{25}-e^{34}&  0,\frac{1}{2},\frac{1}{2},\frac{1}{2},\frac{1}{2},1,1\\
0,0,e^{12},e^{13},e^{23},e^{15}+e^{24},e^{14}+e^{16}+e^{34}&  \frac{1}{3},0,\frac{1}{3},\frac{2}{3},\frac{1}{3},\frac{2}{3},1\\
0,0,e^{12},e^{13},0,e^{14}+e^{25},e^{16}+e^{25}+e^{35}& 0,1,1,1,0,1,1\\
0,0,0,e^{12},e^{14}+e^{23},e^{15}-e^{34},e^{16}+e^{23}-e^{35}&  0,1,0,1,1,1,1\\
0,0,0,e^{12},e^{13},e^{14}+e^{24}-e^{35},e^{25}+e^{34}&  \frac{1}{2},\frac{1}{2},\frac{1}{2},1,1,\frac{3}{2},\frac{3}{2}\\
\end{array}\]
It is easy to verify that, for each of the Lie algebras in the table, (G1)--(G5) cannot be satisfied by any weight sequence: in the first four entries $0$ appears as a weight, so (G1) cannot hold, and in the last entry the only possibile weight sequence satisfying (G1) is the one where the $w_i$ appear in nondecreasing order, which implies $w_3+w_5=w_6$, violating (G2).
\end{proof}

\begin{table}
\caption{\label{tbl:nonnicefoad} Non-nice nilpotent Lie algebras of dimension $7$ admitting a weight sequence satisfying (G1)--(G5), with a weight sequence $w_1,\dotsc, w_7$ and a basis adapted to it.}
\begin{tabular}{LLL}
\lie g  & w_1,\dotsc, w_7& \text{adapted basis}\\
\hline\\
\multicolumn{2}{L}{0,0,e^{12},e^{13},0,e^{14}+e^{23}+e^{25},0}\\ &\frac{2}{5}  \lambda_1,\frac{1}{5}  \lambda_1,\frac{3}{5}  \lambda_1,\frac{3}{5}  \lambda_1,\frac{4}{5}  \lambda_1,\lambda_1,\lambda_2&e_2,e_1,e_3,e_5,e_4,e_6,e_7\\
\multicolumn{2}{L}{0,0,e^{12},e^{13},e^{14}+e^{23},e^{15}+e^{24},e^{23}}\\ &1,\frac{1}{2},\frac{3}{2},2,\frac{5}{2},\frac{5}{2},3&e_2,e_1,e_3,e_4,e_5,e_7,e_6\\
\multicolumn{2}{L}{0,0,e^{12},e^{13},e^{14}+e^{23},e^{25}-e^{34},e^{23}}\\ &\frac{2}{3},\frac{1}{3},1,\frac{4}{3},\frac{5}{3},\frac{5}{3},\frac{7}{3}&e_2,e_1,e_3,e_4,e_5,e_7,e_6\\
\multicolumn{2}{L}{0,0,e^{12},e^{13},e^{14},0,e^{15}+e^{23}+e^{26}}\\ &\frac{1}{7},\frac{3}{7},\frac{4}{7},\frac{4}{7},\frac{5}{7},\frac{6}{7},1&e_1,e_2,e_3,e_6,e_4,e_5,e_7\\
\multicolumn{2}{L}{0,0,e^{12},e^{13},e^{14}+e^{23},0,e^{15}+e^{24}+e^{26}}\\ &\frac{1}{2},1,\frac{3}{2},2,2,\frac{5}{2},3&e_1,e_2,e_3,e_4,e_6,e_5,e_7\\
\multicolumn{2}{L}{0,0,e^{12},e^{13},0,e^{14}+e^{23}+e^{25},e^{16}+e^{24}+e^{35}}\\ &\frac{1}{3},\frac{2}{3},1,1,\frac{4}{3},\frac{5}{3},2&e_1,e_2,e_3,e_5,e_4,e_6,e_7\\
\multicolumn{2}{L}{0,0,e^{12},e^{13},0,e^{14}+e^{23}+e^{25},e^{26}-e^{34}}\\ &1,\frac{1}{2},\frac{3}{2},\frac{3}{2},2,\frac{5}{2},\frac{7}{2}&e_2,e_1,e_3,e_5,e_4,e_6,e_7\\
\multicolumn{2}{L}{0,0,e^{12},e^{13},0,e^{23}+e^{25},e^{14}}\\ &\lambda_1,-\lambda_1+\lambda_2,\lambda_2,\lambda_2,-\lambda_1+2 \lambda_2,\lambda_1+\lambda_2,-2 \lambda_1+3 \lambda_2&e_2,e_1,e_3,e_5,e_4,e_6,e_7\\
\multicolumn{2}{L}{0,0,e^{12},e^{13},0,e^{14}+e^{23},e^{23}+e^{25}}\\ &1,2,3,3,4,5,5&e_1,e_2,e_3,e_5,e_4,e_6,e_7\\
\multicolumn{2}{L}{0,0,e^{12},0,e^{13},e^{23}+e^{24},e^{15}+e^{16}+e^{25}+\lambda e^{26}+e^{34}}\\ &\frac{1}{4},\frac{1}{4},\frac{1}{2},\frac{1}{2},\frac{3}{4},\frac{3}{4},1&e_1,e_2,e_3,e_4,e_5,e_6,e_7\\
\multicolumn{2}{L}{0,0,e^{12},e^{13},0,e^{14}+e^{23}+e^{25},0}\\ &\frac{2}{5}  \lambda_1,\frac{1}{5}  \lambda_1,\frac{3}{5}  \lambda_1,\frac{3}{5}  \lambda_1,\frac{4}{5}  \lambda_1,\lambda_1,\lambda_2&e_2,e_1,e_3,e_5,e_4,e_6,e_7\\
\multicolumn{2}{L}{0,0,e^{12},e^{13},0,e^{14}+e^{23}+e^{25},e^{15}}\\ &1,\frac{1}{2},\frac{3}{2},\frac{3}{2},2,2,\frac{5}{2}&e_2,e_1,e_3,e_5,e_4,e_7,e_6\\
\multicolumn{2}{L}{0,0,0,e^{12},e^{14}+e^{23},e^{23},e^{15}-e^{34}}\\ &-\lambda_1+\lambda_2,2 \lambda_1-\lambda_2,\lambda_1,-2 \lambda_1+2 \lambda_2,\lambda_2,\lambda_2,-\lambda_1+2 \lambda_2&e_1,e_2,e_4,e_3,e_5,e_6,e_7\\
\multicolumn{2}{L}{0,0,e^{12},0,e^{23},e^{14},e^{16}+e^{25}+e^{26}-e^{34}}\\ &\frac{1}{4},\frac{1}{4},\frac{1}{2},\frac{1}{2},\frac{3}{4},\frac{3}{4},1&e_1,e_2,e_3,e_4,e_5,e_6,e_7\\
\multicolumn{2}{L}{0,0,e^{12},0,e^{13}+e^{24},e^{14},e^{15}+e^{23}+\frac{1}{2} e^{26}+\frac{1}{2} e^{34}}\\ &\frac{1}{3},\frac{2}{3},\frac{2}{3},1,1,\frac{4}{3},\frac{5}{3}&e_1,e_2,e_4,e_3,e_6,e_5,e_7\\
\multicolumn{2}{L}{0,0,e^{12},0,e^{13}+e^{24},e^{14},e^{15}+\lambda e^{23}+e^{34}+e^{46}}\\ &\frac{1}{5},\frac{2}{5},\frac{2}{5},\frac{3}{5},\frac{3}{5},\frac{4}{5},1&e_1,e_2,e_4,e_3,e_6,e_5,e_7\\
\multicolumn{2}{L}{0,0,0,e^{12},e^{13},e^{15}+e^{35},e^{25}+e^{34}}\\ &-\lambda_1+\lambda_2,-\lambda_1+\lambda_2,-2 \lambda_1+2 \lambda_2,2 \lambda_1-\lambda_2,\lambda_1,-3 \lambda_1+3 \lambda_2,\lambda_2&e_1,e_3,e_5,e_2,e_4,e_6,e_7\\
\multicolumn{2}{L}{0,0,0,e^{12},e^{23},-e^{13},e^{15}+e^{16}+e^{26}-2 e^{34}}\\ &\lambda_1,-\lambda_1+\lambda_2,-\lambda_1+\lambda_2,\lambda_2,-2 \lambda_1+2 \lambda_2,\lambda_2,-\lambda_1+2 \lambda_2&e_3,e_1,e_2,e_5,e_4,e_6,e_7\\
\multicolumn{2}{L}{0,0,0,e^{12},e^{23},-e^{13},(-e^{16}+e^{25}) \lambda+2 e^{26}-2 e^{34}}\\ & \frac{1}{2} \lambda_2-\frac{1}{2} \lambda_1,\frac{1}{2} \lambda_2-\frac{1}{2} \lambda_1,\lambda_2-\lambda_1,\lambda_1,\frac{1}{2} \lambda_2+\frac{1}{2} \lambda_1,\frac{1}{2} \lambda_2+\frac{1}{2} \lambda_1,\lambda_2&e_1,e_2,e_4,e_3,e_5,e_6,e_7\\
\multicolumn{2}{L}{0,0,0,e^{12},e^{14}+e^{23},0,e^{15}-e^{34}+e^{36}}\\ &\lambda_1,-2 \lambda_1+\lambda_2,2  \lambda_1,-\lambda_1+\lambda_2,-\lambda_1+\lambda_2,\lambda_2,\lambda_1+\lambda_2&e_1,e_2,e_3,e_4,e_6,e_5,e_7\\
\multicolumn{2}{L}{0,0,0,e^{12},e^{14}+e^{23},0,e^{15}+e^{24}-e^{34}+e^{36}}\\ &\frac{1}{5},\frac{2}{5},\frac{2}{5},\frac{3}{5},\frac{3}{5},\frac{4}{5},1&e_1,e_2,e_3,e_4,e_6,e_5,e_7\\
\multicolumn{2}{L}{0,0,e^{12},0,0,e^{13}+e^{14},e^{15}+e^{23}}\\ &2 \lambda_2-2 \lambda_1,\lambda_2-\lambda_1,-\lambda_2+2 \lambda_1,\lambda_1,\lambda_1,\lambda_2,-\lambda_2+3 \lambda_1&e_5,e_2,e_1,e_3,e_4,e_7,e_6\\
\multicolumn{2}{L}{0,0,e^{12},0,0,2 e^{13}+e^{14}+e^{25},e^{15}+2 e^{23}-e^{24}}\\ &\frac{1}{3},\frac{1}{3},\frac{2}{3},\frac{2}{3},\frac{2}{3},1,1&e_1,e_2,e_3,e_4,e_5,e_6,e_7\\
\end{tabular}
\end{table}

The method can also be applied to nice Lie algebras, with the advantage that one can consider the natural split torus formed by diagonal derivations relative to the nice basis,
and that nice nilpotent Lie algebras are classified up to dimension $9$ (see \cite{Conti_Rossi_2019a}). Notice that the classification is up to a notion of isomorphism which is weaker than being isomorphic as Lie algebras, resulting in larger numbers. Some Lie algebras in the classification come in families, but since the properties under study do not depend on the values of the structure constants, we will not distinguish between a nice Lie algebra and a family with the same set of nonvanishing structure constants.

Using this grading, we can determine the nice Lie algebras that admit a weight sequence satisfying (G1)--(G5), obtaining Tables A--D in the ancillary file. We can summarize the conclusion as follows:
\begin{proposition}
\label{prop:nicegraded}
The nice nilpotent Lie algebras which admit a weight sequence associated to the torus of diagonal derivations satisfying (G1)--(G5) are:
\begin{itemize}
 \item all of those of dimension less than $6$;
 \item $34$ out of $36$ in dimension $6$;
 \item $158$ out of $162$ in dimension $7$;
 \item $824$ out of $917$ in dimension $8$;
 \item $5994$ out of $6882$ in dimension $9$.
\end{itemize}
\end{proposition}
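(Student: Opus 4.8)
The plan is to prove the proposition by an exhaustive, algorithmic verification over the classification of nice nilpotent Lie algebras of dimension at most $9$ from \cite{Conti_Rossi_2019a}, exactly as anticipated in the discussion preceding the statement. Fix such a Lie algebra $\g$ together with its nice basis $e_1,\dotsc, e_n$. By \cite{Dere_Lauret_2019}, the diagonal part of any derivation of $\g$ is again a derivation, so the diagonal derivations form a split torus $\lie t\subset\Aut(\g)$; concretely, $\diag(d_1,\dotsc, d_n)\in\lie t$ if and only if $d_i+d_j=d_k$ for every triple $(i,j,k)$ with $de^k(e_i,e_j)\neq0$. This is a homogeneous linear system over $\Q$, whose solution space is computed once; the associated grading $\g=\bigoplus_\alpha\g_\alpha$ puts $e_i$ in the layer of a weight $\alpha_i\in\lie t^*$, and the multiset $(\alpha_1,\dotsc,\alpha_n)$, together with the finite table recording which sums $\alpha_a+\alpha_b$ equal which $\alpha_c$ (resp. are weights at all), encodes all the data needed to test (G1)--(G5).

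Since the basis $e_1,\dotsc, e_n$ is adapted to this grading, any weight sequence attached to it is of the form $w_1,\dotsc, w_n=\alpha_{\sigma(1)},\dotsc,\alpha_{\sigma(n)}$ for a permutation $\sigma$, and $\g$ admits a weight sequence satisfying (G1)--(G5) precisely when some such $\sigma$ does; in that case Theorem~\ref{thm:foad} produces a Ricci-flat antidiagonal metric. The search over $\sigma$ is made finite and efficient by building the sequence incrementally: (G1) forces $w_i,w_j<w_k$ whenever $w_i+w_j=w_k$ with $i\neq j$, so a candidate may be appended only if it is maximal, in the partial order generated by these relations, among the weights not yet placed; the antidiagonal pairing $i\leftrightarrow\hat i$ is respected by inserting the pair $(w_i,w_{\hat i})$ simultaneously. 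For each resulting ordering one checks the remaining conditions (G2)--(G5), which only involve the multiplicities of the $\alpha_i$ and the pairing. Lie algebras in \cite{Conti_Rossi_2019a} occurring in families with the same set of nonvanishing structure constants have identical weight data, so a single representative of each family suffices. This procedure is implemented in \cite{skoll}; running it over all dimensions $\leq 9$ yields Tables A--D in the ancillary file and the stated counts.

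The content of the proposition is thus a verification rather than a single argument, and the genuine difficulty is organizational: one must be sure that the incremental construction enumerates \emph{every} ordering compatible with the pairing, so that the negative half of the statement — that the $2$, $4$, $93$ and $888$ excluded algebras admit no admissible weight sequence for the torus of diagonal derivations — is conclusive, and that the search stays tractable in dimension $9$, where $6882$ algebras must be processed. In practice (G1) is very restrictive: a single weight chain $w_a+w_b=w_c$, $w_c+w_d=w_a$, or the presence of a zero weight, already eliminates all orderings, so the pruned enumeration is fast; and no polynomial system is ever solved, everything reducing to linear algebra over $\Q$ together with this combinatorial search.
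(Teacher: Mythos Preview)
Your approach matches the paper's: the proposition is presented there as a computational summary, obtained by running the program \cite{skoll} over the classification of \cite{Conti_Rossi_2019a}, using the torus of diagonal derivations (via \cite{Dere_Lauret_2019}) and enumerating weight sequences compatible with (G1), with the outcomes recorded in Tables A--D of the ancillary file. Two small slips: in the incremental construction the element you append should be \emph{minimal} among those not yet placed (one whose summands have all been inserted), not maximal; and the remark about inserting the pair $(w_i,w_{\hat i})$ simultaneously is not part of the paper's algorithm and does not obviously make sense, since the pairing is determined by position only after the full sequence is fixed---the paper simply enumerates topological sorts for the (G1) partial order and tests (G2)--(G5) on each.
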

The situation can be improved a little by choosing a different split torus. For instance, the $8$-dimensional nice Lie algebra
\[0,0,0,e^{12},e^{13},e^{23},e^{24}+e^{35},e^{36}+e^{14}\]
does not admit a weight sequence satisying (G1)--(G5) relative to the split torus of diagonal derivations, but if one considers the maximal split torus, which acts diagonally on the basis
\[e_1+e_2,\frac12(e_1-e_2),e_3,-e_4,-e_5-e_6,\frac12(e_5-e_6),-e_7-e_8,\frac12(e_7-e_8),\]
the condition is satisfied. In fact, changing the basis in this manner transforms the Lie algebra into
\[0,0,0,e^{12},-e^{13},e^{23},e^{14}+e^{35},e^{24}+e^{36}\]
which is also nice and appears in Table C with a grading satisfying (G1)--(G5). Nevertheless, it is already evident from Theorem~\ref{thm:nonnicegradings} that tweaking bases in this way is not sufficient to find a Ricci-flat metric on every nilpotent Lie algebra.

%

\section{Metrics from filtrations}
\label{sec:enumeratefiltrations}
In this section we illustrate how to apply Theorem~\ref{thm:ffoad} to find a Ricci-flat metric on a fixed Lie algebra. We will start with a Lie algebra expressed in terms of a basis, and look for a filtration with a weight sequence satisfying  (F1)--(F5), assuming that the basis is adapted to the filtration.

In order to construct these filtrations, we proceed as follows:
\begin{enumerate}
\item Find all reorderings of a fixed basis $e_1,\dotsc, e_n$ such that $e_i\hook de^j=0$ only if $i<j$ (reflecting (F1)) and such that $[e_i,e_{\hat i}]\in \Span{e_n}$ (reflecting (F2)).
\item Assign to each $e_i$ a weight $w_i$ and write the conditions \eqref{eqn:filtration} and (F1)--(F5) as a system of linear equalities and inequalities in the unknowns $w_i$.
\item Determine whether the system has a solution, and if so compute one.
\end{enumerate}
The first step amounts to extending a partial order relation to a total order. Finding one such extension is a well-known problem known as topological sorting (see e.g. \cite{Kahn_1962}). In our implementation \cite{skoll}, we used an auxiliary total order on the basis elements given by $e_i<e_j$ when $i<j$, and considered the induced lexicographic order on the set of possible reorderings; we then adapted the algorithm of \cite{Kahn_1962} to iterate in lexicographic order the set of reorderings of $e_1,\dotsc, e_n$ that respect the partial order.

To determine whether a system of linear equalities and inequalities admit a solution, we used a simplistic approach based on the Fourier-Mozkin algorithm. This turned out to be sufficient to handle the low-dimensional Lie algebras considered in this paper, though  a more sophisticated approach using e.g. the simplex algorithm could be necessary to tackle higher dimensions.

As a counterpart to Theorem~\ref{thm:nonnicegradings}, we find:
\begin{theorem}
\label{thm:nonnicefiltration}
Every non-nice nilpotent Lie algebra of dimension $\leq 7$ has a filtration satisfying (F1)--(F5).
\end{theorem}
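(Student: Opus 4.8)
The plan is a finite verification organized by dimension, which reduces almost immediately to the seven‑dimensional case. First I would record that a nilpotent Lie algebra of dimension at most $5$ always admits a nice basis, so there is nothing to prove below dimension $6$, and that in dimension $6$ the unique non‑nice nilpotent Lie algebra is the one of Example~\ref{ex:nonnice6}; for that algebra a filtration satisfying (F1)--(F5) is immediate, e.g.\ by taking $L_w=\bigoplus_{h\geq w}\g_h$ for the grading displayed there and checking the (few, essentially vacuous) remaining conditions directly. Thus the entire content of the statement lies in dimension $7$, where \cite{Conti_Rossi_2019a} provides an explicit list of the $39$ non‑nice nilpotent Lie algebras, some occurring in one‑parameter families which may be ignored because the hypotheses of Theorem~\ref{thm:ffoad} involve only the set of nonvanishing structure constants, not their values.

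For each of these $39$ algebras I would run the procedure of Section~\ref{sec:enumeratefiltrations}: enumerate, by topological sorting, the reorderings of the given basis $e_1,\dots,e_7$ that are compatible with the partial order reflecting (F1) (namely $e_i\hook de^j\neq 0$ only when $i<j$) and with the requirement $[e_i,e_{\hat i}]\in\Span{e_n}$ reflecting (F2); for each such reordering, translate \eqref{eqn:filtration} together with (F1)--(F5) into a system of linear equalities and inequalities in the unknown weights $w_1,\dots,w_7$; and decide feasibility, for instance by Fourier--Motzkin elimination. The claim is then that, carried out with \cite{skoll}, every one of the $39$ systems is feasible, and exhibiting one reordered basis together with one rational (or integer) weight sequence per algebra proves the theorem. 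Many entries of the list already appear in Table~\ref{tbl:nonnicefoad} with a grading satisfying (G1)--(G5); but since the weight sequences there need not be nondecreasing, one cannot simply reuse them as filtrations and must re‑run the search, so the substance really is the uniform success of this search.

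The main obstacle, and the reason this statement is strictly stronger than Theorem~\ref{thm:nonnicegradings}, is concentrated in the algebras \emph{not} appearing in Table~\ref{tbl:nonnicefoad}: the eleven characteristically nilpotent ones, all of whose derivations are nilpotent, which therefore possess no nontrivial split torus and hence no nontrivial grading whatsoever, and the five whose only nontrivial grading places $0$ among the weights or otherwise forces a violation of (G1)--(G2). For the characteristically nilpotent ones nothing combinatorial comes from gradings; one has only the intrinsic positive filtrations (such as the lower central series) and their reorderings, and one must verify that among these there is a weight sequence satisfying (F1)--(F5). This is exactly the point at which replacing the grading equalities $w_i+w_j=w_k$ by the filtration inequalities $w_i+w_j\leq w_k$ creates enough slack for the linear systems to be solvable. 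The crux of the proof is thus confirming feasibility of these finitely many linear programs and checking that the chosen solutions also satisfy the multiplicity conditions (F3)--(F4); this is routine algebra by algebra but relies essentially on the computation in \cite{skoll}.
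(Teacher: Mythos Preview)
Your proposal is correct and follows essentially the same approach as the paper: reduce to dimension~$7$ using Example~\ref{ex:nonnice6} for the unique six-dimensional case, then verify case by case via the search procedure of Section~\ref{sec:enumeratefiltrations} that each of the $39$ non-nice seven-dimensional algebras admits an adapted basis and integer weight sequence satisfying (F1)--(F4). The paper's proof is simply the terse version of yours: it records the rescaled weight sequence $1,2,3,3,4,5$ for dimension~$6$ and then points to Table~\ref{tbl:nonniceffoad}, where an explicit reordered basis and weight sequence are listed for every seven-dimensional entry, rather than re-describing the algorithm that produced them.
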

\begin{proof}
The non-nice nilpotent Lie algebra of dimension $6$ was shown in Example~\ref{ex:nonnice6} to admit a grading satisfying (G1)--(G5). Rescaling the same weight sequence we obtain $1,2,3,3,4,5$, which satisfies (F1)--(F5). An explicit weight sequence is given in Table~\ref{tbl:nonniceffoad} for each non-nice nilpotent Lie algebra of dimension $7$.
\end{proof}

\begin{longtable}[tpc]{L L L}
\caption{\label{tbl:nonniceffoad} Non-nice nilpotent Lie algebras of dimension $7$ and filtrations satisfying (F1)--(F5).}\\
\toprule
\g &   \text{adapted basis}&w_1,\dotsc, w_7  \\
\midrule
\endfirsthead
\multicolumn{3}{c}{\tablename\ \thetable\ -- \textit{Continued from previous page}} \\
\toprule
\g &   \text{adapted basis}&w_1,\dotsc, w_n  \\
\midrule
\endhead
\bottomrule\\[-7pt]
\multicolumn{3}{c}{\tablename\ \thetable\ -- \textit{Continued to next page}} \\
\endfoot
\bottomrule\\[-7pt]
\multicolumn{3}{c}{End of \tablename\ \thetable} \\
\endlastfoot
\multicolumn{2}{L}{0,0,e^{12},e^{13},0,e^{14}+e^{23}+e^{25},0}\\ &e_1,e_2,e_3,e_4,e_5,e_6,e_7&2,3,8,11,12,16,18\\
\multicolumn{2}{L}{0,0,e^{12},e^{13},e^{14},e^{15}+e^{23},e^{16}+e^{23}+e^{24}}\\ &e_1,e_2,e_3,e_4,e_5,e_6,e_7&32,92,144,226,273,320,392\\
\multicolumn{2}{L}{0,0,e^{12},e^{13},e^{14},e^{15}+e^{23},e^{16}+e^{24}+e^{25}-e^{34}}\\ &e_1,e_2,e_3,e_4,e_5,e_6,e_7&48,138,212,315,390,464,576\\
\multicolumn{2}{L}{0,0,e^{12},e^{13},e^{14}+e^{23},e^{15}+e^{24},e^{16}+e^{23}+e^{25}}\\ &e_1,e_2,e_3,e_4,e_5,e_6,e_7&4,13,28,37,46,56,64\\
\multicolumn{2}{L}{0,0,e^{12},e^{13},e^{14}+e^{23},e^{15}+e^{24},-e^{16}+e^{23}-e^{25}}\\ &e_1,e_2,e_3,e_4,e_5,e_6,e_7&2,4,10,13,16,21,24\\
\multicolumn{2}{L}{0,0,e^{12},e^{13},e^{14}+e^{23},e^{15}+e^{24},e^{23}}\\ &e_1,e_2,e_3,e_4,e_5,e_6,e_7&16,32,72,103,128,176,188\\
\multicolumn{2}{L}{0,0,e^{12},e^{13},e^{14}+e^{23},e^{25}-e^{34},e^{23}}\\ &e_1,e_2,e_3,e_4,e_5,e_6,e_7&4,8,16,41,54,64,68\\
\multicolumn{2}{L}{0,0,e^{12},e^{13},e^{14},e^{23},e^{16}+e^{24}+e^{25}-e^{34}}\\ &e_1,e_2,e_3,e_4,e_5,e_6,e_7&1,6,8,14,17,20,24\\
\multicolumn{2}{L}{0,0,e^{12},e^{13},e^{14},e^{23},e^{15}+e^{25}+e^{26}-e^{34}}\\ &e_1,e_2,e_3,e_4,e_5,e_6,e_7&8,8,26,47,63,72,80\\
\multicolumn{2}{L}{0,0,e^{12},e^{13},e^{23},e^{15}+e^{24},e^{14}+e^{16}+e^{25}+e^{34}}\\ &e_1,e_2,e_3,e_4,e_5,e_6,e_7&4,10,16,39,50,56,64\\
\multicolumn{2}{L}{0,0,e^{12},e^{13},e^{23},e^{15}+e^{24},e^{14}+e^{16}-e^{25}+e^{34}}\\ &e_1,e_2,e_3,e_4,e_5,e_6,e_7&4,10,16,39,50,56,64\\
\multicolumn{2}{L}{0,0,e^{12},e^{13},e^{23},e^{15}+e^{24},e^{14}+e^{16}+e^{34}}\\ &e_1,e_2,e_3,e_4,e_5,e_6,e_7&4,10,16,39,50,56,64\\
\multicolumn{2}{L}{0,0,e^{12},e^{13},e^{23},e^{15}+e^{24},e^{14}+e^{16}+\lambda e^{25}+e^{26}+e^{34}-e^{35}}\\ &e_1,e_2,e_3,e_4,e_5,e_6,e_7&2,2,5,8,8,11,13\\
\multicolumn{2}{L}{0,0,e^{12},e^{13},e^{23},-e^{14}-e^{25},e^{16}+e^{25}-e^{35}}\\ &e_1,e_2,e_3,e_4,e_5,e_6,e_7&1,4,8,16,16,22,24\\
\multicolumn{2}{L}{0,0,e^{12},e^{13},e^{23},-e^{14}-e^{25},e^{15}+e^{16}+e^{24}+\lambda e^{25}-e^{35}}\\ &e_1,e_2,e_3,e_4,e_5,e_6,e_7&8,14,31,48,48,68,79\\
\multicolumn{2}{L}{0,0,e^{12},e^{13},e^{14},0,e^{15}+e^{23}+e^{26}}\\ &e_1,e_2,e_3,e_4,e_5,e_6,e_7&8,8,38,51,64,84,92\\
\multicolumn{2}{L}{0,0,e^{12},e^{13},e^{14}+e^{23},0,e^{15}+e^{24}+e^{26}}\\ &e_1,e_2,e_3,e_4,e_5,e_6,e_7&8,8,38,51,64,84,92\\
\multicolumn{2}{L}{0,0,e^{12},e^{13},0,e^{14}+e^{25},e^{16}+e^{25}+e^{35}}\\ &e_1,e_2,e_3,e_4,e_5,e_6,e_7&1,4,8,16,16,22,24\\
\multicolumn{2}{L}{0,0,e^{12},e^{13},0,e^{14}+e^{23}+e^{25},e^{16}+e^{24}+e^{35}}\\ &e_1,e_2,e_3,e_4,e_5,e_6,e_7&1,8,14,16,16,28,30\\
\multicolumn{2}{L}{0,0,e^{12},e^{13},0,e^{14}+e^{23}+e^{25},e^{26}-e^{34}}\\ &e_1,e_2,e_3,e_4,e_5,e_6,e_7&4,4,12,21,26,32,36\\
\multicolumn{2}{L}{0,0,e^{12},e^{13},0,e^{14}+e^{23}+e^{25},e^{15}+e^{26}-e^{34}}\\ &e_1,e_2,e_3,e_4,e_5,e_6,e_7&8,8,26,47,59,72,80\\
\multicolumn{2}{L}{0,0,0,e^{12},e^{14}+e^{23},e^{15}-e^{34},e^{16}+e^{23}-e^{35}}\\ &e_1,e_2,e_3,e_4,e_5,e_6,e_7&1,4,4,12,16,18,20\\
\multicolumn{2}{L}{0,0,e^{12},e^{13},0,e^{23}+e^{25},e^{14}}\\ &e_1,e_2,e_3,e_4,e_5,e_6,e_7&12,21,64,81,86,112,128\\
\multicolumn{2}{L}{0,0,e^{12},e^{13},0,e^{14}+e^{23},e^{23}+e^{25}}\\ &e_1,e_2,e_3,e_4,e_5,e_6,e_7&6,28,40,58,63,80,96\\
\multicolumn{2}{L}{0,0,e^{12},0,e^{13},e^{23}+e^{24},e^{15}+e^{16}+e^{25}+\lambda e^{26}+e^{34}}\\ &e_1,e_2,e_3,e_4,e_5,e_6,e_7&2,2,6,9,12,14,16\\
\multicolumn{2}{L}{0,0,e^{12},e^{13},0,e^{14}+e^{23}+e^{25},0}\\ &e_1,e_2,e_3,e_4,e_5,e_6,e_7&12,56,112,130,133,192,224\\
\multicolumn{2}{L}{0,0,e^{12},e^{13},0,e^{14}+e^{23}+e^{25},e^{15}}\\ &e_1,e_2,e_3,e_4,e_5,e_6,e_7&12,21,64,81,86,112,128\\
\multicolumn{2}{L}{0,0,0,e^{12},e^{14}+e^{23},e^{23},e^{15}-e^{34}}\\ &e_1,e_2,e_3,e_4,e_5,e_6,e_7&2,5,8,11,16,18,20\\
\multicolumn{2}{L}{0,0,e^{12},0,e^{23},e^{14},e^{16}+e^{25}+e^{26}-e^{34}}\\ &e_1,e_2,e_3,e_4,e_5,e_6,e_7&2,2,6,9,12,14,16\\
\multicolumn{2}{L}{0,0,e^{12},0,e^{13}+e^{24},e^{14},e^{15}+e^{23}+\frac{1}{2} e^{26}+\frac{1}{2} e^{34}}\\ &e_1,e_2,e_3,e_4,e_5,e_6,e_7&2,2,6,9,12,14,16\\
\multicolumn{2}{L}{0,0,e^{12},0,e^{13}+e^{24},e^{14},e^{15}+\lambda e^{23}+e^{34}+e^{46}}\\ &e_1,e_2,e_4,e_3,e_5,e_6,e_7&4,4,4,17,24,24,28\\
\multicolumn{2}{L}{0,0,0,e^{12},e^{13},e^{14}+e^{24}-e^{35},e^{25}+e^{34}}\\ &e_2,e_1,e_3,e_4,e_5,e_7,e_6&1,4,4,15,20,22,24\\
\multicolumn{2}{L}{0,0,0,e^{12},e^{13},e^{15}+e^{35},e^{25}+e^{34}}\\ &e_1,e_2,e_3,e_5,e_4,e_6,e_7&4,8,8,21,28,32,36\\
\multicolumn{2}{L}{0,0,0,e^{12},e^{23},-e^{13},e^{15}+e^{16}+e^{26}-2 e^{34}}\\ &e_1,e_2,e_3,e_4,e_5,e_6,e_7&2,2,4,7,9,10,12\\
\multicolumn{2}{L}{0,0,0,e^{12},e^{23},-e^{13},(-e^{16}+e^{25}) \lambda+2 e^{26}-2 e^{34}}\\ & e_1,e_2,e_3,e_4,e_5,e_6,e_7&2,2,4,7,9,10,12\\
\multicolumn{2}{L}{0,0,0,e^{12},e^{14}+e^{23},0,e^{15}-e^{34}+e^{36}}\\ &e_1,e_2,e_3,e_4,e_5,e_6,e_7&4,4,4,11,16,16,20\\
\multicolumn{2}{L}{0,0,0,e^{12},e^{14}+e^{23},0,e^{15}+e^{24}-e^{34}+e^{36}}\\ &e_1,e_2,e_3,e_4,e_5,e_6,e_7&4,4,4,17,24,24,28\\
\multicolumn{2}{L}{0,0,e^{12},0,0,e^{13}+e^{14},e^{15}+e^{23}}\\ &e_1,e_2,e_3,e_4,e_5,e_6,e_7&8,15,28,34,39,48,58\\
\multicolumn{2}{L}{0,0,e^{12},0,0,2 e^{13}+e^{14}+e^{25},e^{15}+2 e^{23}-e^{24}}\\ &e_1,e_2,e_3,e_4,e_5,e_6,e_7&4,8,16,17,18,28,32\\
\end{longtable}

The procedure we have illustrated does not determine whether a given Lie algebra admits a filtration satisfying (F1)--(F5), because it only detects filtrations adapted to a fixed basis. In order to show that a filtration satisfying (F1)--(F5) does not exist, we can resort to the following:
\begin{lemma}
\label{lemma:nofiltration}
Let $\g$ be a filiform nilpotent Lie algebra of dimension $2n$; let $e_1,\dotsc, e_{2n}$ be a basis adapted to the lower central series in the sense that $\g^k$ is spanned by $e_{2+k},\dotsc, e_{2n}$. If  $[e_k,e_{\hat k}]$ is not zero for some $3\leq k\leq n$, then there is no filtration satisfying (F1)--(F5).
\end{lemma}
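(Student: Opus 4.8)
The plan is to argue by contradiction. Suppose $\g$ admits a positive filtration $\g=L_1\supset\dots\supset L_N$ together with an adapted basis, whose weight sequence $w_1\le\dots\le w_{2n}$ satisfies (F1)--(F5), and suppose moreover that $[e_k,e_{\hat k}]\neq 0$ for some $3\le k\le n$. The strategy is to show that such a filtration is forced to reproduce the lower central series of $\g$ layer by layer (even though the adapted basis is a priori unrelated to the given basis $e_1,\dots,e_{2n}$), and then to read off a contradiction from conditions (F2) and (F3). The starting point is that $[\g,L_{w_{2n}}]\subset L_{w_{2n}+1}=0$, so the top layer $L_{w_{2n}}$ lies in the centre of $\g$; since $\g$ is filiform, its centre is one-dimensional and equals $\g^{2n-2}=\Span{e_{2n}}$, whence $L_{w_{2n}}=\Span{e_{2n}}$ and $w_{2n}$ has multiplicity one.

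The core is a downward induction: I claim that for each $r=0,\dots,2n-3$ the weight $w_{2n-r}$ has multiplicity one and $L_{w_{2n-r}}=\g^{2n-2-r}=\Span{e_{2n-r},\dots,e_{2n}}$, the case $r=0$ being the observation above. For the inductive step, multiplicity one of $w_{2n-r+1}$ forces $w_{2n-r}<w_{2n-r+1}$, so these are consecutive values of the weight sequence and $L_{w_{2n-r}+1}=L_{w_{2n-r+1}}=\g^{2n-1-r}$ by the inductive hypothesis; hence $[\g,L_{w_{2n-r}}]\subset\g^{2n-1-r}$, i.e.\ $L_{w_{2n-r}}/\g^{2n-1-r}$ is central in $\g/\g^{2n-1-r}$. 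A quotient of a filiform Lie algebra by a term of its lower central series is again filiform, so $\g/\g^{2n-1-r}$ has one-dimensional centre $\g^{2n-2-r}/\g^{2n-1-r}$ and therefore $L_{w_{2n-r}}\subseteq\g^{2n-2-r}$; on the other hand $L_{w_{2n-r}}$ contains $\g^{2n-1-r}$ (of dimension $r$) together with an adapted basis vector of weight $w_{2n-r}$ outside $\g^{2n-1-r}=L_{w_{2n-r+1}}$, so $\dim L_{w_{2n-r}}\ge r+1=\dim\g^{2n-2-r}$. Equality follows, and it also forces $w_{2n-r}$ to have multiplicity one. The upshot is $w_3<w_4<\dots<w_{2n}$, and for $3\le j\le 2n$ the weight of $e_j$ relative to the filtration is exactly $w_j$.

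For the contradiction, recall $e_k\in\g^{k-2}$ and $e_{\hat k}\in\g^{\hat k-2}=\g^{2n-1-k}$, so $[e_k,e_{\hat k}]\in\g^{2n-3}=\Span{e_{2n-1},e_{2n}}$; write $[e_k,e_{\hat k}]=c_1e_{2n-1}+c_2e_{2n}$, which is nonzero by hypothesis. By the previous paragraph the filtration weights of $e_{2n-1}$ and $e_{2n}$ are $w_{2n-1}<w_{2n}$, so $[e_k,e_{\hat k}]$ has filtration weight $w_{2n-1}$ if $c_1\neq 0$ and $w_{2n}$ otherwise; meanwhile $[L_{w_k},L_{w_{\hat k}}]\subset L_{w_k+w_{\hat k}}$ bounds $w_k+w_{\hat k}$ above by that weight. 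If $c_1\neq 0$ this gives $w_k+w_{\hat k}\le w_{2n-1}$, contradicting (F2); if $c_1=0$ it gives $w_k+w_{\hat k}\le w_{2n}$, which with (F2) forces $w_k+w_{\hat k}=w_{2n}$, and since $3\le k<\hat k\le 2n-2$ the weights $w_k$ and $w_{\hat k}$ are distinct and both of multiplicity one, contradicting (F3). Hence no filtration satisfying (F1)--(F5) exists.

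I expect the downward induction of the second paragraph to be the main obstacle: one must exclude both gaps and repetitions in the weight sequence and show that in no layer can the filtration be strictly coarser than the lower central series. All the leverage comes from the filiform hypothesis, used via the classical facts that a filiform Lie algebra has one-dimensional centre equal to the last nonzero term of its lower central series and that this property is inherited by the quotients $\g/\g^j$.
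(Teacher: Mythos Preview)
Your proof is correct and follows essentially the same strategy as the paper's: both arguments show that any positive filtration satisfying (F1)--(F4) must have $L_{w_j}=\g^{j-2}$ for $3\le j\le 2n$ (you by a clean downward induction using that $L_w/L_{w+1}$ is central in $\g/L_{w+1}$ and that quotients of filiform algebras by terms of the lower central series are filiform; the paper more tersely by tracking the weights $\alpha_i$ of the $e_i$), and then obtain the contradiction from (F2) and (F3). One small remark: your case $c_1\neq 0$ is in fact vacuous, since $[e_k,e_{\hat k}]\in[\g^{k-2},\g^{\hat k-2}]\subset\g^{(k-2)+(\hat k-2)+1}=\g^{2n-2}=\Span{e_{2n}}$, so only the $c_1=0$ branch ever occurs---but this does no harm to the argument.
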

\begin{proof}
Suppose for a contradiction that a filtration satisfying (F1)--(F5) exists. Let $\alpha_i$ be the weight of the smallest layer containing $e_i$. Then $[e_i,e_j]=e_k$ implies $\alpha_i+\alpha_j\leq\alpha_k$. Thus, we have
\[0<\alpha_1,\alpha_2<\alpha_3<\dots <\alpha_{2n}.\]
Each $\alpha_i$ is a weight. The layer with weight $\alpha_{2n}$ only contains $e_{2n}$; therefore, the weight sequence $w_1,\dotsc, w_{2n}$ terminates at $w_{2n}=\alpha_{2n}$, and by the same argument we obtain
\[w_3=\alpha_3,\dotsc, w_{2n}=\alpha_{2n}.\]
Therefore $e_k$ has weight $w_k$ and $e_{\hat k}$ has weight $w_{\hat k}>w_{k}$; both have multiplicity one, and since $[e_k,e_{\hat k}]\neq0$ we have $w_k+w_{\hat k}\leq w_{2n}$, contradicting (F3).
\end{proof}

\begin{proposition}
\label{prop:filtrationupto8}
Every nice Lie algebra of dimension $\leq 8$ has a filtration satisfying (F1)--(F5) except the following, which do not admit one:
\begin{gather*}
0,0,-e^{12},e^{13},e^{14},e^{25}+e^{34}\\
0,0,-e^{12},e^{13},e^{14}+e^{23},e^{25}+e^{34}\\
0,0,-e^{12},-e^{13},e^{14},e^{15},e^{16},e^{27}+e^{36}+e^{45}\\
0,0,-e^{12},-e^{13},e^{14},e^{15},e^{16}+e^{23},e^{27}+e^{36}+e^{45}\\
0,0,-e^{12},e^{13},e^{14},-e^{15}+e^{23},e^{16}+e^{24},e^{27}+e^{36}+e^{45}\\
0,0,-e^{12},-e^{13},\frac{3}{2} e^{14}-\frac{1}{2} e^{23},e^{15}+\frac{1}{2} e^{24},e^{16}+e^{25}+e^{34},e^{27}+e^{36}+e^{45}\\
0,0,0,-e^{12},e^{13},e^{15}+e^{24},e^{14}+e^{35},e^{26}+e^{37}+e^{45}
\end{gather*}
\end{proposition}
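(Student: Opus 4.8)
The plan is to prove Proposition~\ref{prop:filtrationupto8} in two halves: a positive part, where for every nice nilpotent Lie algebra of dimension at most $8$ not in the exceptional list one exhibits a filtration satisfying (F1)--(F5), and a negative part, where one shows the seven listed algebras admit no such filtration. The positive part is, as announced in the introduction, a finite verification carried out by the algorithm of Section~\ref{sec:enumeratefiltrations}: for each Lie algebra in the classification of \cite{Conti_Rossi_2019a} in dimensions $\leq 8$, one iterates (via the adapted topological sort) over the reorderings of the given basis for which $e_i\hook de^j=0$ forces $i<j$ and $[e_i,e_{\hat i}]\in\Span{e_n}$, writes down the linear system given by \eqref{eqn:filtration} together with (F1)--(F5), and solves it by Fourier--Motzkin elimination; Theorem~\ref{thm:ffoad} then produces the Ricci-flat metric. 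One would not reproduce the tables here but cite the ancillary files of \cite{skoll}; the only thing worth remarking is that the procedure succeeds in all but the seven stated cases, so the positive part reduces to the negative one.

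For the negative part the main tool is Lemma~\ref{lemma:nofiltration} together with its obvious variants. First I would observe that each of the seven exceptional algebras is filiform of even dimension (dimension $6$ for the first two, dimension $8$ for the next four, dimension $8$ for the last). For the dimension-$6$ examples $0,0,-e^{12},e^{13},e^{14},e^{25}+e^{34}$ and $0,0,-e^{12},e^{13},e^{14}+e^{23},e^{25}+e^{34}$, the displayed basis is already adapted to the lower central series with $\g^k=\Span{e_{2+k},\dots,e_6}$, and in both cases the bracket $[e_3,e_{\hat 3}]=[e_3,e_4]$ has a nonzero component along $e_6$ (coming from the $e^{34}$ term in $de^6$); since $3\leq k=3\leq n=3$, Lemma~\ref{lemma:nofiltration} applies verbatim and rules out any filtration satisfying (F1)--(F5). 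For the four dimension-$8$ filiform algebras, one checks analogously that $e_1,\dots,e_8$ is adapted to the lower central series and that for some $k$ with $3\leq k\leq 4$ the bracket $[e_k,e_{9-k}]$ is nonzero (e.g. $[e_3,e_6]$ or $[e_4,e_5]$ appears in $de^8$ through the terms $e^{36}$, $e^{45}$); Lemma~\ref{lemma:nofiltration} again gives the conclusion.

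The remaining algebra $0,0,0,-e^{12},e^{13},e^{15}+e^{24},e^{14}+e^{35},e^{26}+e^{37}+e^{45}$ is the delicate one: it is filiform of dimension $8$ but the \emph{given} basis is not adapted to the lower central series (the third generator $e_3$ is central in degree-zero, the lower central series has $\g^1=\Span{e_4,\dots,e_8}$, so one must first change basis so that $\g^k$ is spanned by the last $7-k$ vectors), and after this reordering one must recompute the brackets. I expect this to be the main obstacle: one has to verify carefully that, in \emph{every} basis adapted to the lower central series, some bracket $[e_k,e_{\hat k}]$ with $3\leq k\leq 4$ survives, so that Lemma~\ref{lemma:nofiltration} still bites; if the lemma as stated is not literally applicable one would need to run the same weight-chain argument by hand, deducing from (F1)--(F5) and \eqref{eqn:filtration} the strict inequalities $0<\alpha_1,\alpha_2<\alpha_3<\dots<\alpha_8$ and $w_3=\alpha_3,\dots,w_8=\alpha_8$, and then contradicting (F3) via a nonzero antidiagonal bracket. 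Once that single case is handled, the proposition is complete.
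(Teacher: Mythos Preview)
Your treatment of the positive part and of the first six exceptional algebras is correct and matches the paper: those six are indeed filiform of dimension $2n\in\{6,8\}$ with $[e_n,e_{n+1}]\neq 0$ (coming from the $e^{34}$, respectively $e^{45}$, term in $de^{2n}$), so Lemma~\ref{lemma:nofiltration} applies directly.

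The gap is in the seventh algebra
\[\g=(0,0,0,-e^{12},e^{13},e^{15}+e^{24},e^{14}+e^{35},e^{26}+e^{37}+e^{45}).\]
This algebra is \emph{not} filiform: a direct computation gives lower central series dimensions $8,5,3,1,0$, so $\g$ is only $4$-step nilpotent. (Your own observation that $\g^1=\Span{e_4,\dots,e_8}$ has dimension $5$ already contradicts filiformity, which would require $\dim\g^1=6$.) Hence Lemma~\ref{lemma:nofiltration} is unavailable, and your fallback plan of deducing a strict chain $\alpha_3<\alpha_4<\dots<\alpha_8$ with one-dimensional successive quotients cannot get off the ground.

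The paper's argument for this case is genuinely different. One uses that $\g'$ has codimension $3$ and $\g''$ has codimension $5$; since any positive filtration has $\g'\subset L_2$ and $\g''\subset L_3$, a dimension count forces $\g'$ to coincide with the layer $L_{w_4}$ and $\g''$ with $L_{w_6}$, so that $w_3<w_4\leq w_5<w_6$. Now $\g'=\g''\oplus\Span{e_4,e_5}$ and $[e_4,e_5]\neq 0$ (from the $e^{45}$ in $de^8$), giving $w_4+w_5\leq w_8$; but the weights $w_4,w_5$ are squeezed strictly between $w_3$ and $w_6$, so neither can have the multiplicity required by (F3) or (F4). The idea you are missing, then, is to replace the filiform chain argument by this coarser dimension count on $\g'$ and $\g''$.
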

\begin{proof}
We first show that the Lie algebras appearing in the statement do not admit a filtration satisfying (F1)--(F5). The first six entries are filiform of dimension $2n=6,8$ with $[e_n,e_{n+1}]$ nonzero, so we can apply directly Lemma~\ref{lemma:nofiltration}.

For the last one, suppose that a weight sequence $w_1,\dotsc, w_9$ satisfies (F1)--(F5). By a dimension count, $\g'$ is the layer of weight $w_4$, and $\g''$ is the layer of weight $w_6$. In particular, $w_4,w_5$ are strictly contained between $w_3$ and $w_6$. Since $\g'=\g''\oplus\Span{e_4,e_5}$ and $[e_4,e_5]\neq0$, we see that $w_4+w_5\leq w_{9}$, which violates (F3) or (F4).

For all other nice nilpotent Lie algebra of dimension $\leq8$, an explicit filtration and weight sequence satisfying (F1)--(F5) is given in Tables~E, F and G (see ancillary file).
\end{proof}

Considering that nice nilpotent Lie algebras of dimension $8$ are 917 in number, Proposition~\ref{prop:filtrationupto8} shows that the filtration ansatz is effective, but it does not quite solve the problem of constructing a Ricci-flat metric on every nilpotent Lie algebra. The situation becomes worse in dimension $9$, where even proving nonexistence of the filtration is generally harder, as we can see in the following example:
\begin{example}
The nilpotent Lie algebra
\[0,0,0,- e^{12},e^{14},e^{15}-e^{23},e^{34}-e^{16},e^{35}+e^{17},e^{47}+e^{56}+e^{28}+e^{13}.\]
does not admit a filtration satisfying (F1)--(F5).

Indeed, consider an arbitrary positive filtration. As in the proof of Lemma~\ref{lemma:nofiltration}, let $\alpha_i$ be the weight of the smallest layer containing $e_i$. Thus, we have
\[\alpha_1,\alpha_2<\alpha_4<\alpha_5, \quad \alpha_3,\alpha_5<\alpha_6<\alpha_7<\alpha_8<\alpha_9.\]
Arguing as in  Lemma~\ref{lemma:nofiltration}, we see that
\[w_k=\alpha_k, \quad k=6,7,8,9.\]
Since the filtration is positive, $e_4$ belongs to a layer contained in $\Span{e_3,e_4,\dotsc, e_9}$. Thus $\alpha_4$ coincides with one of $w_3,w_4,w_5$; however, the possibility $\alpha_4=w_5$ cannot occur, for otherwise $w_5<\alpha_5<w_6$.

Now assume that (F1)--(F5) hold. If $\alpha_4=w_3<w_4$, then $w_1,w_2<w_3$, so $w_3$ has multiplicity one, as does $w_7$. This violates (F3), because  $[e_4,e_7]=-e_9$ and (F2) implies $w_3+w_7=w_9$.

Thus, $\alpha_4=w_4$ and $\alpha_5=w_5$. Arguing as above, we see that $w_3\neq\alpha_1$, since $[e_1,e_7]=-e_8$, but $w_3+w_7$ should exceed $w_8$ by (F2).  Thus, $\alpha_1$ equals $w_1$ or $w_2$.

We have
\[w_8\geq \alpha_1+\alpha_7\geq 2\alpha_1+\alpha_6.\]
Thus, (F2) gives
\[w_4+w_6> w_8\geq 2\alpha_1+\alpha_6,\]
and $w_4>2\alpha_1$. Since $\alpha_1+\alpha_2\leq w_4$, this implies that $\alpha_1<\alpha_2$. Therefore $\alpha_2$ equals $w_2$ or $w_3$. If $\alpha_2=w_2<w_3$, then it has multiplicity one, which contradicts the fact that $[e_2,e_8]=-e_9$. Thus, necessarily $\alpha_2=w_3$, implying that $w_4$ has multiplicity one. Therefore, $w_4+w_6>w_9$ by (F3), and therefore
\[w_5+w_6>w_4+w_6>w_9,\]
which contradicts \eqref{eqn:filtration} because $[e_5,e_6]=-e_9$.
\end{example}

\section{Proofs of Theorems A and B}
In this section we prove that nilpotent Lie algebras of dimension $\leq7$ and nice nilpotent Lie algebras of dimension $\leq 9$ have a Ricci-flat metric; these results were stated as Theorems A and B in the introduction.

Combining Theorem~\ref{thm:nonnicefiltration}, Proposition~\ref{prop:filtrationupto8} and Example~\ref{ex:nonnice6}, we see that every nilpotent Lie algebra of dimension $\leq 7$ and every nice nilpotent Lie algebra of dimension $\leq8$ have a Ricci-flat metric except those appearing in Proposition~\ref{prop:filtrationupto8}. For the others, a different ansatz is needed. The two entries of dimension $6$ are already covered by the arrow-breaking construction of \cite{Conti_delBarco_Rossi_2021}, which we will not recall here, since it does not apply to all the examples we need to consider. Instead, we consider the more general class of $\sigma$-diagonal metrics. Recall from  \cite{Conti_Rossi_2019b} that a $\sigma$-diagonal metric on a nice Lie algebra $\g$ with nice basis $e_1,\dotsc, e_n$ is a metric of the form $\sum_{i=1}^ng_ie^i\otimes e^{\sigma_i}$, with $\sigma$ an order two permutation of the indices $\{1,\dotsc, n\}$ and the $g_i$ nonzero real numbers such that $g_i=g_{\sigma_i}$.  A $\sigma$-diagonal metric is determined by the permutation $\sigma$ and a $\sigma$-invariant sequence of nonzero elements $(g_1,\dotsc, g_n)$.

For the $8$-dimensional entries listed in Proposition~\ref{prop:filtrationupto8}, we can construct an explicit $\sigma$-diagonal Ricci-flat metric; see Table~\ref{table:remaining8}. Notice that in all cases except one, the parameters $g_i$ can be chosen arbitrarily.

In order to complete the proof of  Theorem B, we need to show that every nice nilpotent Lie algebra of dimension $9$ has a Ricci-flat metric. We noted in Proposition~\ref{prop:nicegraded} that $5994$ nice nilpotent Lie algebras of dimension $9$ admit a grading satisfying (G1)--(G5). Computations with \cite{skoll} show that  $857$ of the remaining Lie algebras admit a filtration and a weight sequence satifying (F1)--(F5), and  the $31$ remaining Lie algebras carry a $\sigma$-diagonal metric which is Ricci-flat for any choice of  the parameters $g_i$.  The gradings, filtrations and $\sigma$-diagonal metrics are given explicitly  in Table H in the ancillary file.

\begin{table}
\caption{\label{table:remaining8}$\sigma$-diagonal Ricci-flat metrics on nice nilpotent Lie algebras of dimension $8$ without a filtration satisfying (F1)--(F5)}
\[
\begin{array}{ll}
\g&\sigma\\
\hline
\rule{0pt}{1.1\normalbaselineskip}
0,0,-e^{12},-e^{13},e^{14},e^{15},e^{16},e^{27}+e^{36}+e^{45}&18\ 35\
\\
0,0,-e^{12},-e^{13},e^{14},e^{15},e^{16}+e^{23},e^{27}+e^{36}+e^{45}&17\ 28\ 35\
\\
0,0,-e^{12},e^{13},e^{14},-e^{15}+e^{23},e^{16}+e^{24},e^{27}+e^{36}+e^{45}&18\ 26\ 37\
\\
0,0,-e^{12},-e^{13},\frac{3}{2} e^{14}-\frac{1}{2} e^{23},e^{15}+\frac{1}{2} e^{24},e^{16}+e^{25}+e^{34},e^{27}+e^{36}+e^{45}&18\ 26\ 37\\
\multicolumn{2}{r}{\text{with }8g_4g_5 (g_3^{2} -  g_1^{2} )+g_2g_3(9 g_5^{2} +4 g_1^{2} )=0}\\
0,0,0,-e^{12},e^{13},e^{15}+e^{24},e^{14}+e^{35},e^{26}+e^{37}+e^{45}&47\ 56\
\\
\end{array}\]
\end{table}

\section*{Appendix}
In this appendix we show how the  nilpotent Lie algebras of dimension $6,7$ classified respectively in \cite{Magnin_1986} and \cite{Gong_1998} can be rewritten so that a maximal split torus $\lie a$ appears as the direct sum of a space of diagonal matrices and a space of skew-symmetric matrices.
\subsection*{Dimension $6$}
For the Lie algebra
\[0,0,e^{12},e^{13},e^{23},e^{14}-e^{25},\]
we see that the maximal split torus is given by
\[\lie a_\R=\Span{e_1^2+e_2^1+e_4^5+e_5^4,\frac{1}{4} e_1^1+\frac{1}{4} e_2^2+\frac{1}{2} e_3^3+\frac{3}{4} e_4^4+\frac{3}{4} e_5^5+e_6^6}.\]
Relative to the frame $E_1=e_1+e_2, E_2=e_1-e_2, E_3=-2e_3, E_4=-2(e_4+e_5), E_5=-2(e_4-e_5), -4E_6$, we obtain the nice Lie algebra
\[0,0, E^{12},E^{13},E^{23},E^{15}+E^{24},\]
on which $\lie a_\R$ acts diagonally. Similarly, for
\[0,0,0,e^{12},e^{23},e^{14}+e^{35}\]
we compute
\[a_\R=\Span{e_1^1-2 e_2^2+e_3^3-e_4^4-e_5^5,-e_1^3-e_3^1+e_4^5+e_5^4,e_2^2+e_4^4+e_5^5+e_6^6},\]
so changing the frame to $(e_2,e_1+e_3,e_1-e_3,-e_4+e_5,-e_4-e_5,-2e_6)$, we obtain the nice Lie algebra
\[0,0,0,e^{12},e^{13},e^{25}+e^{34}.\]
\subsection*{Dimension 7}
For dimension 7, beside the products with $\R$ of the two six-dimensional cases, the Lie algebras of \cite{Gong_1998} for which a change of basis is needed are the following:
\[247F: 0,0,0,e^{12},e^{13},e^{24}+e^{35},e^{25}+e^{34}\rightsquigarrow 0,0,0,e^{12},e^{13},e^{24},e^{35},\]
having changed the frame to $(e_1,e_2+e_3,e_2-e_3,e_4+e_5,e_4-e_5,2(e_6+e_7),2(e_6-e_7))$, with the label $247F$ referring to the classification of \cite{Gong_1998};
\[247G: 0,0,0,e^{12},e^{13},e^{14}+e^{15}+e^{24}+e^{35},e^{25}+e^{34}\rightsquigarrow 0,0,0,e^{12},e^{13},e^{24},e^{14}+e^{35},\]
having changed the frame to $(2e_1-e_2-e_3,e_2+e_3,e_2-e_3,2(e_4+e_5),2(e_4-e_5),4(e_6+e_7),4(e_6-e_7))$;
\[257J_1: 0,0,e^{12},0,0,e^{13}+e^{14}+e^{25},e^{15}+e^{23}\rightsquigarrow 0,0,e^{12},0,0,2 e^{13}+e^{14}+e^{25},e^{15}+2 e^{23}-e^{24},\]
having changed the frame to $(e_1,e_2,e_3,e_4-\frac12e_3,
\frac12e_5,\frac12e_6,\frac12e_7)$;
\[2457L: 0,0,e^{12},e^{13},e^{23},e^{14}+e^{25},e^{15}+e^{24}\rightsquigarrow 0,0,e^{12},e^{13},e^{23},e^{14},e^{25},\]
having changed the frame to $\frac12(e_1+e_2),\frac12(e_2-e_1),\frac12e_3,\frac14(e_4+e_5),\frac14(e_5-e_4),\frac14(e_6+e_7),\frac14(e_6-e_7)$;
and lastly the one-parameter family
\[147E1: 0,0,0,e^{12},e^{23},- e^{13}, \lambda {(e^{25}-e^{16})}+2 e^{26}-2 e^{34}, \quad \lambda>1.\]
In this last case,  $\lie a$ is spanned by
\[-e_1^1+e_2^2+e_5^5-e_6^6+ \lambda(-e_6^5 + e_5^6 - e_2^1 + e_1^2),-e_1^1+2e_2^2-e_3^3+e_5^5-2e_6^6, e_3^3+e_5^5+e_6^6+e_7^7,\]
where the first element acts by imaginary eigenvalues and the other two by real eigenvalues. We will not change the basis for this Lie algebra because the resulting structure constants are considerably more complicated, and the computation of Theorem~\ref{thm:nonnicegradings} is not affected, as it only uses $\lie a_\R$.

We also note that the Lie algebra
\[12457B: 0,0,e^{12},e^{13},0,e^{14}+e^{25},e^{25}+e^{35}+e^{16}\]
appears with a typo in \cite[Table 2]{Conti_Rossi_2019a}.

\smallskip
\noindent \textbf{Funding details:}
The author acknowledges: the MIUR Excellence Department Project awarded to the Department of Mathematics, University of Pisa, CUP I57G22000700001; the PRIN project n. 2022MWPMAB ``Interactions between Geometric Structures and Function Theories'';  GNSAGA of INdAM.

\printbibliography

\small\noindent   Dipartimento di Matematica, Università di Pisa, largo Bruno Pontecorvo 6, 56127 Pisa, Italy.\\
\texttt{diego.conti@unipi.it}

\end{document}